\newtheorem{corollary}{Corollary}[section]
\newtheorem*{main*}{Main~Theorem}
\newtheorem{conjecture}{Conjecture}
\newtheorem{thm}{Theorem}[section]
\newtheorem{lem}[thm]{Lemma}
\theoremstyle{definition}
\newtheorem{defin}[thm]{Definition}
\newtheorem{remark}[thm]{Remark}
\numberwithin{equation}{section}
\renewcommand{\i}{\infty}
\begin{document}

\title[The Logarithm of Irrational Numbers and Beatty Sequences]{The Logarithm of Irrational Numbers and Beatty Sequences}

\author[G. Polanco E.]{Geremias Polanco E}
\address{Math Department, School of Natural Science\\ Hampshire College\\
893 West St, Amherst, MA, 01002}
\email{gpeNS@hampshire.edu}

\date{April 2011}

\begin{abstract}
In this paper we find an identity that gives a representation for the logarithm of any two irrational numbers $a, b >1$ in terms of a series whose terms are ratios of elements from their Beatty Sequences. We also show that Sturmian sequences can be defined in terms of these ratios. Furthermore, we find an identity for such series that bears a superficial resemblance to (a discrete version of) Frullani's Integral.
\end{abstract}

\maketitle

\textbf{Notation:} Throughout this paper, we use $[x]$ to represent the greatest integer not exceeding $x$, and define $\{ x\}:=x-[x]$. Unless otherwise specified, the letters $a$ and $b$ represent irrational numbers. Also, $A$ and $B$ represent the Beatty sequence $A:=\left([ a n ] \right)_{n=1}^{\infty}$ and $B:=\left([ b n ] \right)_{n=1}^{\infty}$. We will often refer to the integers when we really mean the positive integers, but this will not generate any conceptual ambiguity.\\

\section{Introduction and Statement of Main Result}

Sturmian sequences are part of the general study of combinatorial properties of finite and infinite words that play a role in 
various fields of physics, mathematics, biology and computer science (see \cite{berstel1996} and the references therein). 
In \cite{obryant2002} O'Bryant obtains a special representation for a power series whose $n^{\text{th}}$ term is nonzero for $n$ in a 
particular Beatty sequence and zero otherwise. He then uses that representation to generalize the 
Rayleigh-Beatty Theorem. Other works  (see \cite{komatsu1996} and the references therein) also associate Beatty sequences 
with power series. In another direction, the asymptotic behavior of sums of arithmetic functions involving Beatty sequences and some of 
their applications have been studied in, e.g., \cite{abercrombie2009}, \cite{abercrombie1995}. In this paper we 
study the behavior of an infinite series involving Beatty sequences, establish its convergence and 
obtain (see Theorem \ref{thbrassthm2}) an identity involving the logarithm of their generating irrational slope. The terms of 
the series are differences of ratios of Beatty sequences, not necesarily complementary (such differences of ratios have
interesting properties, one of them being that Sturmian sequences can be defined in terms of them; see Section \ref{thbrasssec6}). 
We find partial sums of arithmetical functions defining Sturmian sequences,
 and partial sums of products of such arithmetical functions. These are then applied with other techniques in order to 
prove our main result.

\begin{main*}
\label{thbrassthm1a}
Let $a>1$ and $b>1$ be irrational numbers. Let $a_n=[ an]$
and $b_n=[ bn]$. Then
$$\lim_{k \to \infty}\frac{1}{k} \sum_{n=1}^\infty  \left(\frac{a_{n+k}}{a_n}-\frac{b_{n+k}}{b_n}\right) + \sum_{n=1}^{\infty}\frac{a \{a^{-1} (n+1)\}-b\{b^{-1}(n+1)\}}{n(n+1)}= \log\frac{a}{b} \text{.}$$
\end{main*}

Sections 2 through 8 of this paper are used to prove this theorem. In sections 2 through 6 we work with a particular case that is interesting because it treats the case when the sequences $A$ and $B$ are complementary. We also use these sections to introduce a series of lemmas  and notation that are used throughout the paper. Sections 7 and 8 are used to prove the main result. Specifically, the main theorem is proved as a corollary of Theorem \ref{thbrassthm2}.  The final section of this article is used to show how Beatty ratios can be used to define inhomogeneous Sturmian sequences and to make some final remarks about Frullani's Integral. We now concentrate in the limit expression on the left hand side of
the main theorem. For any fixed integer $k$, we have 
\begin{align}
\label{thbrass1a}
 &\frac{a_{n+k}}{a_n}-\frac{b_{n+k}}{b_n}=\frac{b_na_{n+k}-a_nb_{n+k}}{a_nb_n}=\\\nonumber
  &\frac{\left(bn-\{bn\}\right)\left(a(n+k)-\{a(n+k)\}\right)-\left(an-\{an\}\right)\left(b(n+k)-\{b(n+k)\}\right)}{a_nb_n}\nonumber
\end{align}
If we expand the numerator and add over $n$ we obtain
\begin{align}
\label{thbrass1a1}
\sum_{n=1}^\infty &=\left(\frac{a_{n+k}}{a_n}-\frac{b_{n+k}}{b_n}\right)=\\\nonumber
&\sum_{n=1}^\infty\frac{-bn\{a(n+k)\}-an\{bn\}-ak\{bn\}+\{bn\}\{a(n+k)\}}{a_nb_n}+\\\nonumber
 &\sum_{n=1}^\infty\frac{an\{b(n+k)\}+bn\{an\}+bk\{an\}-\{an\}\{b(n+k)\}}{a_nb_n}\text{.}
\end{align}
The two summands that are multiplied by $k$, upon divinding by $k$, yield 
\begin{equation}
 \label{thbrass1a2}
\sum_{n=1}^\infty \frac{a\{bn\}-b\{an\}}{a_nb_n}\text{.}
\end{equation}
Also, the summands with the product of two fractional parts, after dividing by $k$ give $O(1/k)$.
The remaining summands, after dividing by $k$, are
\begin{equation}
 \label{thbrass1a3}
\frac{1}{k}\sum_{n=1}^\infty \frac{an\{b(n+k)\}-an\{bn\}}{a_nb_n}+\frac{1}{k}\sum_{n=1}^\infty \frac{-bn\{a(n+k)\}+bn\{an\}}{a_nb_n}
\end{equation}
Consider now the first sum in \eqref{thbrass1a3} and note that the denominator can be written as $abn^2(1+O(1/n))$. 
Hence, for any fixed $M=M(k)$ to be specified later, we can write 
\begin{align}
 \label{thbrass1a4}
\sum_{n=1}^M \frac{an\{b(n+k)\}}{ka_nb_n}- &\sum_{n=1}^{M+k}\frac{an\{bn\}}{ka_nb_n}= \\\nonumber
&  \sum_{n=1}^M \frac{an\{b(n+k)\}}{abkn^2}-\sum_{n=1}^{M+k}\frac{an\{bn\}}{abkn^2}+O\left(\frac{1}{k}\right)=\\\nonumber
&\sum_{n=1}^M \frac{\{b(n+k)\}}{bkn}-\sum_{n=1}^{M+k}\frac{\{bn\}}{bkn}+O\left(\frac{1}{k}\right)=\\\nonumber
&\sum_{n=1}^{M} \left(\frac{\{b(n+k)\}}{kbn}-\frac{\{b(n+k)\}}{bk(n+k)}\right)+O\left(\frac{\log k}{k}\right)=\\\nonumber
&\sum_{n=1}^{M} \frac{\{b(n+k)\}}{kb}\left(\frac{1}{n}-\frac{1}{(n+k)}\right)+O\left(\frac{\log k}{k}\right)=\\\nonumber
& \frac{1}{kb}\sum_{n=1}^{M}\left(\frac{1}{n}-\frac{1}{(n+k)}\right)+O\left(\frac{\log k}{k}\right)=\\\nonumber
&O\left(\frac{\log(M+ k)}{k}\right)+O\left(\frac{\log k}{k}\right)\text{.}\nonumber
\end{align}
\noindent The last equality holds because the only surviving terms inside the telescoping sum in the previous step are $1/n$ for $n=1,...,k$ and
$-1/n$ for $n=M+1,...,M+k$. Taking $M=k^2$, we see that the left hand side of \eqref{thbrass1a4} yields
\begin{equation}
 \label{thbrass1a5}
\sum_{n=1}^M \frac{an\{b(n+k)\}}{ka_nb_n}-\sum_{n=1}^{M+k}\frac{an\{bn\}}{ka_nb_n}=O\left(\frac{\log k}{k}\right)\text{,}
\end{equation}
\noindent which approaches zero as $k$ goes to infinity. Hence we see that when taking limit as $k$ approaches infinity, \eqref{thbrass1a3} equals zero.
Thus, if we divide \eqref{thbrass1a1} by $k$ and take limit as $k$ goes to infinity, we have
\begin{equation}
 \label{thbrass1a6}
\lim_{k\to\infty}\frac{1}{k}\sum_{n=1}^\infty \left(\frac{a_{n+k}}{a_n}-\frac{b_{n+k}}{b_n}\right)= \sum_{n=1}^\infty \frac{a\{bn\}-b\{an\}}{a_nb_n}\text{.}
\end{equation}
Putting together \eqref{thbrass1a6} and Theorem \ref{thbrassthm1a} we have the following corollary.

\begin{corollary}
 \label{thbrasscor1a}
If $a>1$ and $b>1$ are irrational numbers, then
  $$\sum_{n=1}^\infty \left(\frac{a\{bn\}-b\{an\}}{a_nb_n}+\frac{a \{a^{-1} (n+1)\}-b\{b^{-1}(n+1)\}}{n(n+1)}\right)=\log\frac{a}{b}\text{.}$$
\end{corollary}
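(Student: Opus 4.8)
The plan is to read off the corollary as an immediate consequence of the Main Theorem and the identity \eqref{thbrass1a6} proved just above, together with one routine convergence remark. First I would take the identity
$$\lim_{k\to\infty}\frac{1}{k}\sum_{n=1}^\infty \left(\frac{a_{n+k}}{a_n}-\frac{b_{n+k}}{b_n}\right)= \sum_{n=1}^\infty \frac{a\{bn\}-b\{an\}}{a_nb_n}$$
from \eqref{thbrass1a6} and substitute its right-hand side for the limit term on the left-hand side of the Main Theorem. Since the Main Theorem states that this limit term plus $\sum_{n\ge 1}\frac{a \{a^{-1} (n+1)\}-b\{b^{-1}(n+1)\}}{n(n+1)}$ equals $\log(a/b)$, the substitution gives
$$\sum_{n=1}^\infty \frac{a\{bn\}-b\{an\}}{a_nb_n} + \sum_{n=1}^{\infty}\frac{a \{a^{-1} (n+1)\}-b\{b^{-1}(n+1)\}}{n(n+1)}= \log\frac{a}{b}\text{.}$$

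It then remains only to merge the two series into the single series appearing in the statement. For this I would check that each series converges absolutely: in the first, $|a\{bn\}-b\{an\}|<a+b$ while $a_nb_n=(an+O(1))(bn+O(1))=ab\,n^2\bigl(1+O(1/n)\bigr)$, so the $n$th term is $O(1/n^2)$; in the second, the numerator is again bounded by $a+b$ and $1/(n(n+1))=O(1/n^2)$. (Convergence of the first series is in any case already implicit in \eqref{thbrass1a6}.) With both series convergent, the term-by-term identity $\sum_n(x_n+y_n)=\sum_n x_n+\sum_n y_n$ applies and produces exactly
$$\sum_{n=1}^\infty \left(\frac{a\{bn\}-b\{an\}}{a_nb_n}+\frac{a \{a^{-1} (n+1)\}-b\{b^{-1}(n+1)\}}{n(n+1)}\right)=\log\frac{a}{b}\text{.}$$

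I do not expect any real obstacle at this stage: the substantive work --- the interchange of limit and summation in \eqref{thbrass1a1}--\eqref{thbrass1a6}, the telescoping estimate \eqref{thbrass1a4}--\eqref{thbrass1a5} that forces \eqref{thbrass1a3} to vanish in the limit, and the proof of the Main Theorem itself in the later sections --- is either already done in this excerpt or is being assumed. The corollary is a bookkeeping step, and the only caution is not to split the combined series before verifying that each half converges on its own; once that is noted, the proof is the one-line substitution above.
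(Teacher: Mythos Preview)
Your proposal is correct and is exactly the paper's approach: the corollary is stated immediately after the sentence ``Putting together \eqref{thbrass1a6} and Theorem~\ref{thbrassthm1a} we have the following corollary,'' so the intended proof is precisely the one-line substitution you describe. Your added check that each of the two series converges absolutely (so they may be merged into a single sum) is a detail the paper leaves implicit but is the right thing to note.
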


Now, it is reasonable to expect that, in absolute value, the right hand side of \eqref{thbrass1a6} is approximately $\displaystyle \left|\frac{\pi^2}{6b}-\frac{\pi^2}{6a}\right|$.
Thus, since both $a$ and $b$ are greater than $1$, it is reasonable to conjecture the following:
\begin{conjecture}
 \label{thbrassconj1}
If $a>1$ and $b>1$ are irrational numbers, then
$$\left| \log\frac{a}{b}-\sum_{n=1}^{\infty}\frac{a \{a^{-1} (n+1)\}-b\{b^{-1}(n+1)\}}{n(n+1)}\right|<\frac{\pi^2}{6}\text{.}$$
\end{conjecture}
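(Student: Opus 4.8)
The plan is to use Corollary~\ref{thbrasscor1a} to rewrite the quantity inside the absolute value as a concrete series, simplify that series into a difference of two nonnegative series, and then bound each of the two pieces by $\pi^{2}/6$. Since the terms of the series $\sum_{n}\frac{a\{a^{-1}(n+1)\}-b\{b^{-1}(n+1)\}}{n(n+1)}$ are $O(1/n^{2})$, it converges absolutely; combining this with Corollary~\ref{thbrasscor1a} shows that the quantity inside the absolute value in the conjecture is exactly
$$\sum_{n=1}^{\infty}\frac{a\{bn\}-b\{an\}}{a_{n}b_{n}}\text{,}$$
which is also the right-hand side of \eqref{thbrass1a6}. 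So it suffices to prove that the absolute value of this last series is less than $\pi^{2}/6$.

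The key step is the algebraic identity $a\{bn\}-b\{an\}=a(bn-b_{n})-b(an-a_{n})=ba_{n}-ab_{n}$, which turns the $n$-th term into $\frac{ba_{n}-ab_{n}}{a_{n}b_{n}}=\frac{b}{b_{n}}-\frac{a}{a_{n}}$. Since $\frac{b}{b_{n}}=\frac{bn}{n b_{n}}=\frac{b_{n}+\{bn\}}{n b_{n}}=\frac1n+\frac{\{bn\}}{n b_{n}}$, and similarly for $a$, the $n$-th term becomes $\frac{\{bn\}}{n b_{n}}-\frac{\{an\}}{n a_{n}}$. Both series $\sum_{n}\frac{\{bn\}}{n b_{n}}$ and $\sum_{n}\frac{\{an\}}{n a_{n}}$ have nonnegative terms and converge, so I may split the sum and write
$$\sum_{n=1}^{\infty}\frac{a\{bn\}-b\{an\}}{a_{n}b_{n}}=S_{b}-S_{a}\text{,}\qquad\text{where } S_{c}:=\sum_{n=1}^{\infty}\frac{\{cn\}}{n\,c_{n}}\ge 0\text{.}$$

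It then remains to show $S_{b}<\pi^{2}/6$ (and $S_{a}<\pi^{2}/6$ by symmetry). For every integer $n\ge1$ we have $bn>n$, hence $b_{n}=[bn]\ge n$; and $\{bn\}<1$ because $b$ is irrational. Thus $\frac{\{bn\}}{n\,b_{n}}<\frac{1}{n^{2}}$ for every $n$, so summing gives $S_{b}<\sum_{n\ge1}\frac{1}{n^{2}}=\frac{\pi^{2}}{6}$, the inequality being strict since it is already strict for the $n=1$ term; the same estimate applies to $S_{a}$. Finally, $|x-y|\le\max(x,y)$ for nonnegative reals $x,y$, so $|S_{b}-S_{a}|\le\max(S_{a},S_{b})<\pi^{2}/6$, which is exactly the bound in the conjecture.

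There is no genuine obstacle once one spots the simplification $a\{bn\}-b\{an\}=ba_{n}-ab_{n}$: it is this rewriting, together with the elementary bound $b_{n}\ge n$, that makes the constant $\sum_{n}1/n^{2}=\pi^{2}/6$ drop out. The only points that deserve a line of justification are the regrouping of the series supplied by Corollary~\ref{thbrasscor1a} (legitimate because $\sum_{n}\frac{a\{a^{-1}(n+1)\}-b\{b^{-1}(n+1)\}}{n(n+1)}$ converges absolutely) and the passage from a termwise strict inequality to a strict inequality between sums. I would also remark that the bound obtained this way is far from sharp: the true size of the left-hand side reflects the cancellation between $S_{a}$ and $S_{b}$, and should be comparable — up to a constant factor — to the heuristic quantity $\frac{\pi^{2}}{6}\left|\frac1a-\frac1b\right|$ discussed just before the conjecture, rather than to $\pi^{2}/6$ itself.
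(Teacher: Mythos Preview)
Your argument is correct. Note, however, that there is no proof in the paper to compare it to: the statement is presented there as Conjecture~\ref{thbrassconj1}, motivated only by the heuristic that the right-hand side of \eqref{thbrass1a6} should have size roughly $\left|\frac{\pi^{2}}{6b}-\frac{\pi^{2}}{6a}\right|$, and it is left open (together with the sharper Conjecture~\ref{thbrassconj2}). Your proposal therefore actually \emph{settles} Conjecture~\ref{thbrassconj1}. The decisive step is the identity $a\{bn\}-b\{an\}=ba_{n}-ab_{n}$, which rewrites the general term of \eqref{thbrass1a6} as $\frac{b}{b_{n}}-\frac{a}{a_{n}}=\frac{\{bn\}}{n\,b_{n}}-\frac{\{an\}}{n\,a_{n}}$; after that the elementary bounds $c_{n}\ge n$ and $\{cn\}<1$ give $0\le S_{c}<\sum_{n\ge 1}1/n^{2}=\pi^{2}/6$ for $c\in\{a,b\}$, and $|S_{b}-S_{a}|\le\max(S_{a},S_{b})<\pi^{2}/6$ finishes. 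The regrouping via Corollary~\ref{thbrasscor1a} and the strictness of the final inequality are justified exactly as you indicate. Your closing remark that the true size should be comparable to $\frac{\pi^{2}}{6}\bigl|\frac{1}{a}-\frac{1}{b}\bigr|$ matches the paper's own heuristic preceding the conjecture, but this line of argument does not by itself reach the bound~$1$ of Conjecture~\ref{thbrassconj2}.
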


\noindent However, numerical data suggest that the quantity in absolute value is less than $1$, i.e.
\begin{conjecture}
 \label{thbrassconj2}
If $a>1$ and $b>1$ are irrational numbers, then
$$\left| \log\frac{a}{b}-\sum_{n=1}^{\infty}\frac{a \{a^{-1} (n+1)\}-b\{b^{-1}(n+1)\}}{n(n+1)}\right|<1\text{.}$$
\end{conjecture}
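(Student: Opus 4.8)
The plan is to reduce the inequality to a statement about the oscillation of a single function. Writing $g(c):=\sum_{n=1}^\infty\frac{c\{(n+1)/c\}}{n(n+1)}$, the expression inside the absolute value is by definition $\log\frac ab-(g(a)-g(b))$, so if I set $h(c):=\log c-g(c)$ then the quantity to be bounded is exactly $Q(a,b):=|h(a)-h(b)|$; this $h(a)-h(b)$ is also the Beatty-ratio series $\sum_{n=1}^\infty\frac{a\{bn\}-b\{an\}}{a_nb_n}$ of Corollary \ref{thbrasscor1a}. Hence it suffices to prove that $h$ has oscillation $<1$ over the irrationals $c>1$, i.e. $\sup h-\inf h<1$. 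First I would pin down the two boundary regimes. As $c\to1^+$ each $\{(n+1)/c\}\to1$, so $g(c)\to\sum_{n=1}^\infty\frac1{n(n+1)}=1$ and $h(c)\to-1$. As $c\to\infty$ the indices with $n+1<c$ give $\sum_{m=2}^{[c]}\frac1{m-1}\sim\log c+\gamma$ and the rest average $\tfrac12$, so $h(c)\to-(\gamma+\tfrac12)\approx-1.077$. These suggest $h$ hovers near $-1$, consistent with the target.

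The substance lies between these regimes, and the obstruction is that $h$ is not continuous: $g$ is built from the fractional parts $\{(n+1)/c\}$, each of which jumps as $c$ crosses a rational. The natural plan is to (i) evaluate the one-sided limits of $g$ at every rational $c=p/q$ by isolating the resonant indices (those $m=n+1$ with $m/c$ near an integer), and (ii) control $h$ between consecutive rationals using equidistribution of $((n+1)/c)_n$. The bound $Q<1$ would then follow provided the jumps plus the between-jump drift keep the oscillation below $1$.

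The hard part is the size of the jumps, and running step (i) at the simplest rational $c=2$ already decides the question. For $c$ just below $2$ each even index $m$ has $\{m/c\}\to0^+$, so only odd $m$ survive and $g(2^-)=\sum_{j\ge1}\frac1{2j(2j+1)}=1-\log2$; for $c$ just above $2$ each even $m$ has $\{m/c\}\to1^-$, adding $\sum_{j\ge1}\frac2{(2j-1)(2j)}=2\log2$, so $g(2^+)=2\log2+(1-\log2)=1+\log2$. Thus $h(2^-)=2\log2-1\approx0.386$ while $h(2^+)=-1$, a jump of size $2\log2$. Taking $a$ just below $2$ and $b$ just above $2$, both irrational, gives $Q(a,b)=|h(a)-h(b)|\to2\log2\approx1.386$.

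Consequently the obstacle is fatal, not merely technical: the oscillation of $h$ is at least $2\log2>1$, so $Q(a,b)<1$ fails for $a$ slightly below $2$ and $b$ slightly above $2$, and the statement as worded cannot be proved. What the same analysis does establish is the sharp-looking bound: the jump at an integer $q$ is $J_q=\sum_{j\ge1}\frac q{(qj-1)(qj)}$, and these decrease in $q$ (e.g. $J_3\approx0.69$), so $c=2$ is the worst integer and $Q(a,b)\le2\log2$, the only remaining point being to check whether the drift toward $h(\infty)=-(\gamma+\tfrac12)$ pushes the extremal oscillation slightly higher (to about $1.46$). In every case the correct constant is $2\log2$ rather than $1$.
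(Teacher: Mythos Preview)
The paper offers no proof of this statement: it is presented as Conjecture~\ref{thbrassconj2}, motivated only by ``numerical data.'' So there is no proof to compare against. What you have produced is not a proof but a \emph{disproof}, and it is correct.

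Your reduction to the single--variable function $h(c)=\log c-\sum_{n\ge1}\frac{c\{(n+1)/c\}}{n(n+1)}$ is exactly Corollary~\ref{thbrasscor1a}: the quantity in the conjecture equals $|h(a)-h(b)|=\bigl|\sum_{n\ge1}\frac{a\{bn\}-b\{an\}}{a_nb_n}\bigr|$. Your computation of the one-sided limits at $c=2$ is rigorous once one notes that the series is dominated by $\sum 2/n(n+1)$, so term-by-term limits are legitimate; this gives $g(2^-)=1-\log 2$, $g(2^+)=1+\log 2$, hence $h(2^-)=2\log 2-1$ and $h(2^+)=-1$. Choosing irrational $a\uparrow 2$ and $b\downarrow 2$ then yields
\[
\Bigl|\log\tfrac{a}{b}-\sum_{n\ge1}\tfrac{a\{a^{-1}(n+1)\}-b\{b^{-1}(n+1)\}}{n(n+1)}\Bigr|\longrightarrow 2\log 2\approx 1.386>1,
\]
which refutes the conjecture. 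One can also see this directly in the Beatty form: for $a=2-\epsilon$, $b=2+\epsilon$ and $n<1/\epsilon$ one has $\{an\}=1-n\epsilon$, $\{bn\}=n\epsilon$, so the $n$th term is $\frac{-2+O(n\epsilon)}{(2n-1)(2n)}$ and the partial sum over $n<1/\epsilon$ already tends to $-2\log 2$, with the tail $O(\epsilon)$.

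Two remarks. First, your counterexample does \emph{not} touch the weaker Conjecture~\ref{thbrassconj1} with bound $\pi^2/6\approx1.645$; your own heuristics (jumps $J_q$ decreasing in $q$, drift toward $-(\gamma+\tfrac12)$) suggest the true supremum of $|h(a)-h(b)|$ is somewhere between $2\log2$ and about $1.46$, comfortably below $\pi^2/6$. Second, the asymptotic $h(c)\to-(\gamma+\tfrac12)$ as $c\to\infty$ deserves a line of justification (split at $n+1=\lfloor c\rfloor$ and use $\sum_{n\le M}1/n=\log M+\gamma+O(1/M)$ together with equidistribution for the tail); it is inessential to the disproof but needed for your claimed sharp constant.
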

Before proving Theorem \ref{thbrassthm1a}, we examine briefly some specific series similar to the one in that theorem, 
to gain some insight into the heuristics behind the convergence of this type of series.
 Let $a=\frac{1+\sqrt{5}}{2} $,  $b=a^2$,  $a_n=[
an]$ and $b_n=[ bn]$, as in Theorem \ref{thbrassthm1a}. What can we expect from the
 series $ \sum _{n=1}^{\infty} \left(\frac{\{a n \}}{a_n}-\frac{\{ b n\}}{b_n}\right)$?
Since $\{ x n\}$ is uniformly distributed for $x$ irrational, for large $m$ the $m^{th}$ partial
sum of the series behaves like
$\left(\frac{1}{a}-\frac{1}{b}\right)\frac{1}{2}\log m$, and so the series is
divergent. How about $ \sum _{n=1}^{\infty}\left(\frac{a_n}{a_{n+1}}-\frac{b_n}{b_{n+2}}\right)=\frac{1}{2}-\frac{2}{7}+\frac{2}{3}-\frac{5}{10}+\frac{3}{4}-\frac{7}{13}+...$?
Computing some of its partial sums gives: $s_{10}=1.22014275$,
$s_{50}=2.49049396$, $s_{100}=3.149858764$, $s_{500}=4.731312527$,
$s_{1000}=5.420950626$, $s_{10000}=7.720369134$,
$s_{50000}=9.329523382$. We thus expect that this series is divergent.
However, the series $ \sum _{n=1}^{\infty}
\left(\frac{a_n}{a_{n+1}}-\frac{b_n}{b_{n+1}}\right)=\left(\frac{1}{2}-\frac{2}{5}\right)+\left(\frac{2}{3}-\frac{5}{7}\right)+\left(\frac{3}{4}-\frac{7}{10}\right)+...,$
which involves only a minor change compared to the previous one, behaves ``better". Some of the partial sums for this one are
$s_{10}=-0.06921181263$, $s_{50}=-0.08157653315$,
$s_{100}=-0.08271773217$, $s_{500}=-0.08318152710$,
$s_{1000}=-0.08329909024$, $s_{10000}=-0.08340515936$,
$s_{50000}=-0.08342404240$. We start to suspect that we have
a convergent series. Similarly, if we invert the quotients and
consider instead the type of series given in the main theorem,
$ S:= \sum _{n=1}^{\infty} \left(\frac{a_{n+1}}{a_n}-\frac{b_{n+1}}{b_n}\right)=\left(3-\frac{5}{2}\right)+\left(\frac{4}{3}-\frac{7}{5}\right)+\left(\frac{3}{2}-\frac{10}{7}\right)+...,$
we see that its partial sums are $s_{100}=0.5463290032$, ...,$ s_{10000}=0.5475731159$, ... suggesting its convergence, as expected.
These types of series happen to have many interesting
properties, convergence being one of them. We rewrite the last series in a more revealing way.
 \begin{align*}
  S =&\frac{1}{2} -\frac{1}{15}+ \frac{1}{14} + \frac{1}{30}
-\frac{3}{104}+ \frac{1}{45}-\frac{2}{99}+ \frac{1}{60}+ \frac{2}{161}
-\frac{3}{208} +\frac{5}{476}+ \frac{5}{589} \\ &-\frac{4}{357}+
\frac{1}{132} -\frac{1}{104}+...
\end{align*}
 Notice that this series is not an alternating series. The occurrence of
positive and negative terms is not periodic. Moreover, the absolute values of the terms are not monotone.
Similarly, the positive terms are not monotone; e.g.,
$\frac{1}{60}> \frac{2}{161}$ but $\frac{2}{161}< \frac{1}{132}$ ( neither are the negative terms monotone, in general).
However, as can be appreciated when we rewrote $S$, the reason this type of series converges is that there is
enough cancelation. In fact, as it will be evident in the proof, for
the $m^{\text{th}}$ partial sum, we get $\log m$ cancelation. This is precisely
the order of magnitude of the heuristic estimate given above.

A special case here occurs when the irrationals $a$
and $b$ are such that $ 1\leq q<a<q+1$ and $ 1<b=r+\{a\}$, for $q,r
\in \mathbf{Z}$.  In Theorem \ref{thbrassthm1} we treat separately the 
case $a=\phi$ and $b=\phi^2$, where $\phi$ is the golden ratio, because in addition to being special in
the sense that we just described, it also has the special property
that it involves two complementary Beatty sequences. The proof
that we present here differs from the one for the general case. The proof for the particular case
works, with some minor modification, for any numbers $a,b>1$ such that $\frac{1}{a} +
\frac{1}{b}=1$, if $a$ and $b$ have the same fractional parts. Such examples can be created by setting $b=a+t$ for some 
fixed integer $t$, and solving the equation  $\frac{1}{a} +\frac{1}{b}=1$ for $a$.

\section{A Particular Case (Theorem \ref{thbrassthm1})}
\begin{thm}
\label{thbrassthm1}
Let $a=\frac{1+\sqrt{5}}{2}$ and $b=a^2=a+1$. Write $a_n:=[ an]$ and $b_n:=[
bn]$. Then the series
 $$\displaystyle \sum_{n=1}^\infty \left(\frac{a_{n+1}}{a_n}-\frac{b_{n+1}}{b_n}\right)$$
 converges.
\end{thm}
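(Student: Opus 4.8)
The series is $\sum_{n=1}^\infty\left(\frac{a_{n+1}}{a_n}-\frac{b_{n+1}}{b_n}\right)$ with $a=\phi$, $b=\phi^2$. My plan is to exploit the fact that $A$ and $B$ are complementary Beatty sequences (by Rayleigh's theorem, since $\frac1a+\frac1b=1$), together with the special relation $b=a+1$, so that $\{bn\}=\{an\}$ for every $n$. First I would write $a_{n+1}=a_n+[a]+\epsilon_n$ where $\epsilon_n=[a(n+1)]-[an]-[a]\in\{0,1\}$ is the $n$-th term of the Sturmian (in fact Fibonacci) word associated with $a$; since $[a]=1$, we get $a_{n+1}/a_n=1+(1+\epsilon_n)/a_n$, and similarly $b_{n+1}/b_n=1+(2+\delta_n)/b_n$ where $\delta_n=[b(n+1)]-[bn]-[b]\in\{1,2\}$ (because $[b]=2$). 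So the general term becomes $\frac{1+\epsilon_n}{a_n}-\frac{2+\delta_n}{b_n}$, and the whole problem reduces to understanding the partial sums of $\sum \frac{1+\epsilon_n}{a_n}$ against $\sum\frac{2+\delta_n}{b_n}$.

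**The main mechanism: complementarity.** The key observation is that $\epsilon_n=1$ exactly when $n$ is "hit" in a way dual to $B$, and the complementarity $A\sqcup B=\mathbb N$ should let me match the indices. Concretely, I would use the standard fact that for complementary Beatty sequences, $a_n = [an]$ and $b_m=[bm]$ enumerate disjoint sets covering $\mathbb N$, and that $\epsilon_n$ (the increments of $A$) and $\delta_n$ (the increments of $B$) are themselves complementary-type Sturmian words — indeed $\delta_n = 2+\eta_n$ and $\epsilon_n$ and $\eta_n$ are the same Fibonacci word up to a shift, since $b=a+1$ forces $\{bn\}=\{an\}$. This rigidity means that after reindexing, the partial sums $\sum_{n\le N}\frac{1+\epsilon_n}{a_n}$ and $\sum_{n\le N}\frac{2+\delta_n}{b_n}$ each grow like $\frac1a\log(a_N)\sim\frac1a\log N$ and $\frac1b\log(b_N)\sim\frac1b\log(2N)$ respectively — but with the \emph{same} $\log$ rate since $\frac1a+\frac1b=1$ won't by itself cancel; rather, I expect the precise cancellation to come from comparing $\sum_{k\le a_N}\frac1k$ over the $A$-values with $\sum_{k\le b_N}\frac1k$ over the $B$-values, and using $A\sqcup B=\{1,\dots\}$ to telescope the difference against $\sum_{k\le \max}\frac1k$.

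**Carrying it out.** So the concrete steps are: (1) Rewrite the term as $\frac{1+\epsilon_n}{a_n}-\frac{2+\delta_n}{b_n}$ and reduce, via $a_n = an+O(1)$, to controlling $\sum\left(\frac{1+\epsilon_n}{an}-\frac{2+\delta_n}{bn}\right)$ plus an absolutely convergent error (the $O(1/n^2)$ corrections). (2) Use $b=a+1$, hence $\{bn\}=\{an\}$, to pin down the exact combinatorial relation between $(\epsilon_n)$ and $(\delta_n)$; show $1+\epsilon_n$ and $2+\delta_n$ are, up to a sparse correction, proportional with ratio $b/a$, so that $\frac{1+\epsilon_n}{an}-\frac{2+\delta_n}{bn}$ has partial sums that telescope. (3) Invoke the partial-sum estimates for Sturmian arithmetical functions promised in the introduction (the "partial sums of arithmetical functions defining Sturmian sequences") to get that $\sum_{n\le N}\epsilon_n = \frac{N}{a}+O(1)$ and similarly for $\delta_n$, then apply Abel summation to convert $\sum\frac{\epsilon_n}{n}$ into $\frac1a\log N + O(1)$ plus a convergent tail. (4) Add everything: the two $\log N$ contributions are $\frac{1+1/a}{a}\log N$ versus $\frac{2+1/a}{b}\log N$, i.e. $\frac{(a+1)/a}{a}\log N=\frac{b}{a^2}\log N=\frac1a\log N$ versus $\frac{(2+1/a)}{b}\log N$; checking $2+1/a = 2+(a-1)=a+1=b$ gives $\frac{b}{b}\log N=\log N$ — wait, that does not match — so the genuine cancellation must instead happen term-by-term before summing, which is why step (2) is essential: I must show the difference is not two separately-divergent logarithmic pieces but a single telescoping sum with bounded partial sums. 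The main obstacle, then, is precisely step (2): establishing the exact index-matching between the complementary sequences $A$ and $B$ that produces $\log N$-worth of cancellation in the partial sums, rather than merely estimating each piece asymptotically (which loses the cancellation). Once that combinatorial identity is in hand, convergence follows by Abel summation and the elementary estimate $\sum_{k\le N}\frac1k-\sum_{k\le N'}\frac1k = \log(N/N')+O(1/\min(N,N'))$ with $N/N'\to a/b$, so the tail is $O(1/N)\to 0$.
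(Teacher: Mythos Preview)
Your underlying strategy is sound---in fact it is \emph{simpler} than the paper's---but you derailed yourself with an arithmetic slip in Step~4. You computed the first $\log N$ coefficient as $\frac{b}{a^2}\log N$ and then wrote this as $\frac{1}{a}\log N$. But $a^2=b$, so $\frac{b}{a^2}=1$, not $\frac{1}{a}$. Hence both $\log N$ coefficients equal~$1$ and they \emph{do} cancel. Your panic (``wait, that does not match'') and the subsequent vague appeal to a term-by-term index-matching via complementarity are unnecessary. (There is also a small typo earlier: $\delta_n\in\{0,1\}$, not $\{1,2\}$, since $b_{n+1}-b_n\in\{2,3\}$ and $[b]=2$; and in fact $\delta_n=\epsilon_n$ exactly, because $\{bn\}=\{an\}$.)

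Once you fix this, your Steps (1)--(4) go through cleanly: writing the general term as $\frac{1+\epsilon_n}{a_n}-\frac{2+\epsilon_n}{b_n}$, replacing $a_n,b_n$ by $an,bn$ at the cost of an absolutely convergent $O(1/n^2)$ error, the main part becomes $\frac{1}{n}\cdot\frac{1-a+\epsilon_n}{ab}$. Since $\sum_{n\le N}\epsilon_n=[\{a\}(N+1)]=(a-1)N+O(1)$, the partial sums $\sum_{n\le N}(1-a+\epsilon_n)$ are bounded, and Dirichlet's test (or Abel summation) gives convergence immediately.

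By contrast, the paper takes a longer route that genuinely uses the complementarity $A\sqcup B=\mathbb{N}$: it splits the partial sum according to whether $n\in A$ or $n\in B$, then reindexes each piece as a sum over all integers up to $x=[bm]$ weighted by products of indicator functions such as $f(n)f([n/a]+1)$ and $f(n)g([n/a]+1)$. Each of the four resulting pieces is estimated by partial summation as $c_j\log m+\alpha_j+O(1/m)$, and the four $\log m$ coefficients are shown to cancel via $\frac{2}{a^2}+\frac{1}{ab}-\frac{3}{ab}-\frac{2}{b^2}=0$ (using $\frac{1}{a}+\frac{1}{b}=1$ and $a^2=b$). Your direct approach avoids this double-indicator machinery altogether; the paper's route, however, is what generalizes to arbitrary irrational $a,b>1$ in the later sections, where one no longer has $\epsilon_n=\delta_n$.
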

\begin{remark}
Note that in Theorem \ref{thbrassthm1}, $\{a\}=\{b\}=\frac{\sqrt{5}-1}{2}$, and that $1 < a < 2$ and
$2<b<3$. Also note that $\displaystyle{\frac{1}{a} +\frac{1}{b}
=1}$. Thus $A= ( a_n)_{n=1}^{\infty}$, and $B= (
b_n)_{n=1}^{\infty}$ are complementary Beatty sequences.
\end{remark}
The proof of this theorem can be divided into two parts. First, we
write the series as a sum over all the integers, instead of only those
of the form $[ a n ]$ and $[  b n ]$. Second,
we use summation by parts to complete the proof. In the first part of
the proof we will make use of equations 
\eqref{thbrass1}--\eqref{thbrass3} from \cite[Ch.\ 9]{as1}.
\begin{defin}
 \label{thbrassdef1} 
     For a real number $\alpha$, such that $0<\alpha<1$, we define the
characteristic function of $\alpha$ as
           \begin{equation}
               \label{thbrass1} 
                  f_{\alpha}(n)= [ \alpha (n+1) ]-
[ \alpha n ] \text{.}
           \end{equation}
\end{defin}
Clearly $f_{\alpha}(n)=1$ or $f_{\alpha}(n)=0$.  Since the sum
telescopes, we have
\begin{equation} 
\label{thbrass2}
\sum_{n=1}^{m} f_{\alpha}(n) =[ \alpha (m+1) ]\text{.}
\end{equation}
\begin{remark}
Notice that \eqref{thbrass2} gives the number of integers $n\leq m$  for
which $f_{\alpha}(n)=1\text{.}$
\end{remark}
\begin{defin} 
 \label{thbrassdef2}
    For $1<\beta \in \mathbf{R \setminus Q}$, define
    \begin{equation}
      \label{thbrass3} 
           g_{\beta}'(n)=
             \begin{cases}
              1 &\textit{if }     n=[ k\beta ],
\textit{for k }\hspace{1mm} \in \hspace{1mm}\mathbf{Z},\\
              0&      \hspace{3mm} \text{otherwise.}
             \end{cases}
    \end{equation}
\end{defin}

It is a well known fact \cite[Lemma\ 9.1.3]{as1} that for all integers $n$
\begin{equation} 
 \label{thbrass4}
 g_{\beta}' (n)= f_{1/\beta} (n)\text{.}
\end{equation}
\subsection{First Part of the Proof of Theorem \ref{thbrassthm1}}
We are going to prove the main parts of Theorem \ref{thbrassthm1} by a sequence
of lemmas. We start by rewriting $a_{n+1}$ and $b_{n+1}$\text{.}
\begin{lem}
\label{thbrasslem1}
\begin{equation}
a_{n+1}=\begin{cases}
 a_n + 2  &\text{if }     n \in \text{A},\\
 a_n + 1  &      \hspace{3mm} \text{otherwise}\text{.} \end{cases}
\end{equation}
\end{lem}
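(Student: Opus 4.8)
The plan is to reduce the two‑valued behaviour of the gap $a_{n+1}-a_n=[a(n+1)]-[an]$ to the characteristic function of the complementary sequence $B$, which is already under control through \eqref{thbrass3}--\eqref{thbrass4}. First I would record the algebraic identity $2-a=1/b$: since $b=a^2=a+1$, one checks $(2-a)(a+1)=a+2-a^2=a+2-(a+1)=1$. Consequently $n/b=n(2-a)=2n-an$ for every integer $n$, and because $a\in\mathbf{R \setminus Q}$ the number $an$ is never an integer, so the identity $[-x]=-[x]-1$ (valid for $x\notin\mathbf{Z}$) gives
$$\left[\frac{n}{b}\right]=[2n-an]=2n+[-an]=2n-[an]-1=2n-a_n-1\text{.}$$

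Applying this with $n$ replaced by $n+1$ and subtracting, I obtain
$$f_{1/b}(n)=\left[\frac{n+1}{b}\right]-\left[\frac{n}{b}\right]=(2n+1-a_{n+1})-(2n-a_n-1)=2-(a_{n+1}-a_n)\text{,}$$
so that $a_{n+1}-a_n=2-f_{1/b}(n)$. By \eqref{thbrass4} we have $f_{1/b}(n)=g_b'(n)$, and by Definition \ref{thbrassdef2} this is $1$ when $n=[bk]$ for some integer $k$ and $0$ otherwise; that is, $f_{1/b}(n)=1$ exactly when $n\in B$. Since $\frac1a+\frac1b=1$, the sequences $A$ and $B$ are complementary (as recorded in the Remark after Theorem \ref{thbrassthm1}), so $n\in B$ if and only if $n\notin A$. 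Hence $a_{n+1}-a_n=1$ precisely when $n\in B$, i.e.\ when $n\notin A$, and $a_{n+1}-a_n=2$ precisely when $n\in A$, which is the assertion of the lemma.

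I do not expect a genuine obstacle here; the only point that should not be glossed over is the appeal to $[-x]=-[x]-1$, which requires $an$ and $a(n+1)$ to be irrational, and this is immediate from the irrationality of $a$. A more computational alternative would be to write $a=1+\{a\}$, deduce $a_{n+1}-a_n=1+[\{an\}+\{a\}]$, use $\{a\}=1/a$ so that the gap equals $2$ iff $\{an\}\geq 1-\tfrac1a=\tfrac1b$, and then identify this set of $n$ with $A$ via the three‑distance description of $B$; but the route through $1/b$ above is shorter and dovetails with the notation \eqref{thbrass1}--\eqref{thbrass4} used throughout the paper.
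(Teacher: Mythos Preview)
Your proof is correct, but it takes a different route from the paper's. The paper writes $a=1+\{a\}$ and obtains directly
\[
a_{n+1}-a_n=1+[\{a\}(n+1)]-[\{a\}n]=1+f_{\{a\}}(n);
\]
since the golden ratio satisfies $\{a\}=1/a$, equation \eqref{thbrass4} gives $f_{\{a\}}(n)=g_a'(n)$, which is $1$ exactly when $n\in A$, and the lemma follows in one line. Your argument instead passes through the complementary sequence: from $1/b=2-a$ you get $[n/b]=2n-a_n-1$, hence $a_{n+1}-a_n=2-f_{1/b}(n)=2-g_b'(n)$, and then you invoke the Beatty complementarity of $A$ and $B$ to convert ``$n\in B$'' into ``$n\notin A$''. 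Both proofs are short and rest on \eqref{thbrass4}; the paper's version is slightly more direct because it uses only the single identity $\{a\}=1/a$ and avoids the detour through complementarity, and it also makes the parallel Lemma \ref{thbrasslem2} immediate (replace $1$ by $2$, since $[b]=2$ and $\{b\}=\{a\}$). Your approach has the small virtue that the identity $1/b=2-a$ renders every step a concrete integer computation. The ``more computational alternative'' you sketch at the end is in fact essentially the paper's argument, once one notes that $f_{\{a\}}(n)=g_a'(n)$ already identifies the relevant set of $n$ with $A$ without any appeal to a three-distance description.
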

\begin{proof}
\hspace{5mm}
Note that \linespread{0.5}
\begin{align*}
 a_{n+1}-a_n =[ a(n+1)]-[ an]&=[ (1+\{a\})(n+1)]-[ (1+\{a\})n]\\
&=[ n+1+\{a\}(n+1)]-[ n+\{a\}n]\\
&=n+1-n +[ \{a\}(n+1)]-[ \{a\}n]\\
&= 1+f_{\{a\}}(n)\text{.}
\end{align*}
From \eqref{thbrass3} and the fact that $\frac{1}{\{a\}}=a$ we find that
$a_{n+1}=a_n+1+f_a'(n)$, and the result follows.
\end{proof}
Similarly, but using $2<b<3$ and $\{b\}=\{a\}$, we obtain the following lemma.
\begin{lem}
\label{thbrasslem2}
\begin{equation}
  b_{n+1}=\begin{cases}
  b_n + 3\text{,}  &\text{if }     n \in \text{A},\\
  b_n + 2  &      \hspace{3mm} \text{otherwise}\text{.} \end{cases}
\end{equation}
\end{lem}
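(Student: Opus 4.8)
The plan is to imitate, line for line, the argument used for Lemma \ref{thbrasslem1}, exploiting the two features singled out in the remark and in the sentence preceding the statement: that $2<b<3$ and that $\{b\}=\{a\}$. First I would write $b=2+\{b\}$ and compute the consecutive difference
\[
b_{n+1}-b_n=[b(n+1)]-[bn]=[2(n+1)+\{b\}(n+1)]-[2n+\{b\}n],
\]
and then pull the integers $2(n+1)$ and $2n$ out of the brackets to get $b_{n+1}-b_n=2+f_{\{b\}}(n)$, where $f_{\{b\}}$ is the characteristic function of Definition \ref{thbrassdef1} (this is legitimate since $0<\{b\}<1$). This is exactly the place where the hypothesis $2<b<3$, as opposed to $1<a<2$ in Lemma \ref{thbrasslem1}, makes the additive constant $2$ rather than $1$.

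Next I would identify $f_{\{b\}}(n)$ with the indicator of membership in the Beatty set $A$. Since $\{b\}=\{a\}$ we have $f_{\{b\}}(n)=f_{\{a\}}(n)$, and because $a=\frac{1+\sqrt5}{2}$ satisfies $a^2=a+1$, i.e.\ $a=1+\frac1a$, we have $\{a\}=\frac1a$, hence $\frac{1}{\{a\}}=a$. Then \eqref{thbrass4} applied with $\beta=a$ gives $f_{\{a\}}(n)=f_{1/a}(n)=g_a'(n)$, which by Definition \ref{thbrassdef2} equals $1$ precisely when $n=[ka]$ for some integer $k$, that is, when $n\in A$. Combining the two displays yields $b_{n+1}=b_n+2+g_a'(n)$, which is exactly the claimed two-case formula.

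I do not expect a genuine obstacle here: the proof is a direct transcription of the proof of Lemma \ref{thbrasslem1}, and the only points needing a moment's care are the bookkeeping of which integer ($2$ versus $1$) is extracted from the integer parts, and the use of $\{b\}=\{a\}$ so that the \emph{same} underlying Beatty set $A$ governs both recurrences. Everything else is routine.
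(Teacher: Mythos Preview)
Your proposal is correct and is precisely the argument the paper intends: it merely says ``Similarly, but using $2<b<3$ and $\{b\}=\{a\}$, we obtain the following lemma,'' and you have written out those details in the obvious way. There is nothing to add.
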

Using these two lemmas, we immediately have:

\noindent\begin{minipage}{.45\linewidth}
 \centering
 \begin{equation}
\frac{a_{n+1}}{a_{n}}=\begin{cases}
 1+\frac{2}{a_n }\text{,}  &\textit{if }     \textit{n} \in \textit{A},\\
 1+\frac{1}{a_n}\text{,}  &      \hspace{3mm} \textit{otherwise} \text{.}\end{cases}
\end{equation}
\end{minipage}\hfill
\begin{minipage}{.45\linewidth}
 \centering
 \begin{equation}
  \frac{b_{n+1}}{b_{n}}=\begin{cases}
  1+\frac{3}{b_n}\text{,}  &\textit{if }     \textit{n} \in \textit{A},\\
  1+\frac{2}{b_n}\text{,}  &      \hspace{3mm} \textit{otherwise}\text{.} \end{cases}
\end{equation}
\end{minipage}
\vspace{1cm}

The sets $A$ and $B$ form a partition of the integers. Hence, after rewriting the $m^{th}$ 
partial sum of the series in Theorem \ref{thbrassthm1}
as
 $$ \displaystyle H:= \sum_{n=1}^m
\frac{a_{n+1}}{a_{n}}-\frac{b_{n+1}}{b_{n}}=
  \sum_{\substack{n \leq m \\
  n\in A }}
  \frac{2}{a_n }-\frac{3}{b_n}  +
\sum_{\substack{n \leq m \\
  n\in B }}
  \frac{1}{a_n }-\frac{2}{b_n}\text{,}
$$
\noindent we obtain Lemma \ref{thbrasslem3}.
\begin{lem}
\label{thbrasslem3}
Let $a$ and $b$ be given as in Theorem \ref{thbrassthm1} and let $H_1'$ and $H_2'$ be given by
 \begin{equation*}
 H_1'=  \sum_{\substack{n \leq m \\
          a_n=[ ka] \\
          k\in A }}
        \frac{2}{a_n } +
   \sum_{\substack{n \leq m \\
           a_n=[ ka] \\
           k\in B }}
   \frac{1}{a_n}\hspace{.5cm}
\end{equation*}
\noindent and
 \begin{equation}
     H_2':=\sum_{\substack{n \leq m \\
     b_n=[ kb ] \\
     k \in A }}
   \frac{3}{b_n}+
\sum_{\substack{n \leq m \\
     b_n=[ kb ] \\
     k \in B }}
   \frac{2}{b_n}\text{.}
\end{equation}
Then it follows that $H=H_1'-H_2'$.
\end{lem}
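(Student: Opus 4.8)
The plan is to take the rewriting of $H$ displayed immediately before the statement as the starting point, rearrange it into the difference of an ``$a$-sum'' and a ``$b$-sum'', and then recognize each of those as a mere relabelling of the corresponding primed sum. Since $A$ and $B$ partition the positive integers (this is the content of the Remark, and uses $\frac1a+\frac1b=1$), the terms of $H$ have already been split according to whether $n\in A$ or $n\in B$; collecting the $a$-contributions and the $b$-contributions separately in that finite sum gives
\begin{equation*}
H=\Bigl(\sum_{\substack{n\le m\\ n\in A}}\frac{2}{a_n}+\sum_{\substack{n\le m\\ n\in B}}\frac{1}{a_n}\Bigr)-\Bigl(\sum_{\substack{n\le m\\ n\in A}}\frac{3}{b_n}+\sum_{\substack{n\le m\\ n\in B}}\frac{2}{b_n}\Bigr)=:H_1-H_2 .
\end{equation*}
It therefore suffices to prove $H_1=H_1'$ and $H_2=H_2'$.

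For $H_1=H_1'$, the point is that the map $k\mapsto[ka]$ is injective on the positive integers: for $a>1$ the interval $(ka,(k+1)a)$ has length $a>1$ and so contains an integer, whence $[(k+1)a]\ge[ka]+1$ and the sequence $A$ is strictly increasing. Consequently, for a positive integer $n$ the relation $a_n=[ka]$, i.e.\ $[an]=[ka]$, forces $k=n$. Hence the constraint ``$a_n=[ka]$ for some $k\in A$'' is equivalent to ``$n\in A$'', and similarly with $B$ in place of $A$; inserting these two equivalences into the definition of $H_1'$ turns it term by term into $H_1$. The same argument with $b$ in place of $a$ — legitimate because $b>2>1$, so $B$ is strictly increasing as well — yields $H_2'=H_2$, and therefore $H=H_1-H_2=H_1'-H_2'$.

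I do not expect a genuine obstacle: the statement is a bookkeeping identity. The one point deserving care is that the reindexing of index sets is an honest bijection — each $n\le m$ with $n\in A$ arises from exactly one admissible $k$, namely $k=n$, and the cases $k\in A$ and $k\in B$ are disjoint and together exhaust all such $n$, which is exactly where complementarity of $A$ and $B$ is invoked; the same remark applies verbatim to $H_2'$. The reason for recasting $H_1$ and $H_2$ in this apparently circular ``primed'' form is forward-looking: displaying the summation index as a Beatty value $[ka]$ (resp.\ $[kb]$), with the pre-image $k$ classified by its membership in $A$ or $B$, is precisely the shape needed in order to apply the characteristic-function identities \eqref{thbrass2} and \eqref{thbrass4} in the summation-by-parts step that comes next.
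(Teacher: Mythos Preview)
Your proposal is correct and follows the paper's approach: the paper simply records the displayed splitting of $H$ according to $n\in A$ versus $n\in B$ (using Lemmas~\ref{thbrasslem1} and~\ref{thbrasslem2} together with the partition property) and declares that Lemma~\ref{thbrasslem3} follows, leaving implicit the relabelling step that you spell out. Your observation that $[an]=[ka]$ forces $k=n$ by strict monotonicity of $k\mapsto[ka]$ is exactly the tacit identification the paper is making between the conditions ``$n\in A$'' and ``$a_n=[ka]$ with $k\in A$''.
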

Next, set 
\begin{align}
\label{thbrass3a}
x=x(b,m)&= [ m b ] \\
y=y(a,m)&= [ m a] \label{thbrass3b}
\end{align}
 In the rest of this paper, unless otherwise specified,
when we write $x$ or $y$ we refer to $x(b,m)$ and $y(a,m)$ respectively. Note that
\begin{equation}
\label{thbrass4a}
 [ (x+1) b^{-1} ]= [ (mb+1-\{mb\}) b^{-1} ]=m+
[ (1-\{mb\}) b^{-1} ]=m\text{.} 
\end{equation}
The last equality holds because inside the bracket function we have
the product of two factors that are each less than one. Similarly
\begin{equation}
\label{thbrass5}
[ (y+1) a^{-1} ]=m\text{.} 
\end{equation}
Note that the sums $H_1'$ and $H_2'$ only run over numbers that are in $A$ and numbers that are in $B$,
respectively. As can be clearly seen, the $a_n$'s in the first sum of $H_1'$ come from multiplying by an element of
$A$ while the $a_n$'s in the second sum of $H_1'$ come from multiplying by an element of $B$. 
A similar arrangement can be observed in $H_2'$. 
For instance, the integer $10$ belongs to $B$, and it comes from multiplying by an element of $A$, since
$10=[ 4b]$ and $4\in A$. This observation suggests that in order to
write these two partial sums over all the integers, we need to introduce
indicator functions that tell us when a particular integer belongs to
the sequence $A$ or $B$, and when that integer comes from
multiplying by an element of $A$ or $B$. This is our immediate goal.
To do this we define the arithmetic functions given below in terms of
the characteristic functions of $\frac{1}{a}$ and $\frac{1}{b}$. The reasons for extending these two characteristic functions
in the form given below are as follows: As it can be clearly seen, the sum $H_1'$ runs over all elements of $A$ 
in the interval $[a_1,a_m]$, and $H_2'$ runs over numbers in $B$ in the interval $[b_1,b_m]$. We would like to combine 
these two sums into a single sum with upper limit given by $x$, which is the largest limit of summation in the two partial sums $H_1'$ and $H_2'$. 

\begin{defin}
 \label{thbrassdef3}
 Let $a$ and $b$ be the slope of two Beatty sequences $A$ and $B$, respectively, and for $\alpha$ irrational, $0<\alpha<1$, 
let  $f_{\alpha}$ be given by Definition \ref{thbrassdef1}. Also, let $x=x(b,m)$ and $y=y(a,m)$ be given as in \eqref{thbrass3a} and \eqref{thbrass3b}, 
respectively. Define $f$ and $g$ as follows\\
\noindent\begin{minipage}{.45\linewidth}
 \centering
 \begin{equation}
         \label{thbrass6}
             f(n)=
                 \begin{cases}
                     f_{\frac{1}{a}}(n)\textit{,}  & \hspace{5mm}\text{if }
 n\leq y\text{,}\\
                     0\text{,}&      \hspace{10mm} \text{otherwise.}
                  \end{cases}
     \end{equation}

\end{minipage}\hfill
\begin{minipage}{.45\linewidth}
 \centering
 \begin{equation}
         \label{thbrass6a}
             g(n)=
                 \begin{cases}
                     g_{\frac{1}{b}}(n)\textit{,}  & \hspace{5mm}\text{if }
 n\leq x\text{,}\\
                     0 \text{,}&      \hspace{10mm} \text{otherwise.}
                 \end{cases}
     \end{equation}

\end{minipage}
\end{defin}

\vspace{.5cm}
In this part of the proof we will make also use of the following
lemma from \cite[proof of Lemma\ 9.1.3]{as1}.
\begin{lem}
\label{thbrasslem4}
   Let $\alpha>1$ be any irrational  number. Then $n=[ \alpha
k]$ if and only if $ k=\left([
\frac{n}{\alpha}]+1\right)$ and $[ \frac{n+1}{\alpha}]=k$.
\end{lem}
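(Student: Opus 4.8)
The plan is to strip the floor functions off both sides of the biconditional, reduce everything to a short chain of inequalities among $k$, $n/\alpha$ and $(n+1)/\alpha$, and then use the irrationality of $\alpha$ (and $\alpha>1$) to sharpen the weak inequalities into strict ones. Throughout, $k$ and $n$ denote positive integers.

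First I would record the basic reformulation. Since $\alpha$ is irrational and $k\ge 1$, the number $\alpha k$ is never an integer, so $n=[\alpha k]$ holds precisely when $n<\alpha k<n+1$, i.e.\ when $\frac{n}{\alpha}<k<\frac{n+1}{\alpha}$. Because $\alpha>1$, the interval $\left(\frac{n}{\alpha},\frac{n+1}{\alpha}\right)$ has length $1/\alpha<1$ and hence contains at most one integer; this is exactly what forces the value of $k$ to be $[\frac{n}{\alpha}]+1$.

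For the forward implication, assuming $\frac{n}{\alpha}<k<\frac{n+1}{\alpha}$, the left inequality together with integrality of $k$ gives $[\frac{n}{\alpha}]+1\le k$, while $k<\frac{n+1}{\alpha}=\frac{n}{\alpha}+\frac1\alpha<\frac{n}{\alpha}+1$ gives $k-1<\frac{n}{\alpha}$, hence $k-1\le[\frac{n}{\alpha}]$; combining these, $k=[\frac{n}{\alpha}]+1$. For the second assertion $[\frac{n+1}{\alpha}]=k$, I already have $k<\frac{n+1}{\alpha}$, and $\frac{n+1}{\alpha}<k+1$ follows from $n+1<\alpha k+\alpha=\alpha(k+1)$, which holds because $n<\alpha k$ and $\alpha>1$.

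For the converse, assume $k=[\frac{n}{\alpha}]+1$ and $[\frac{n+1}{\alpha}]=k$. The first equation yields $\frac{n}{\alpha}<[\frac{n}{\alpha}]+1=k$, so $n<\alpha k$; the second yields $k\le\frac{n+1}{\alpha}$, so $\alpha k\le n+1$, and since $\alpha k$ is irrational it is $\ne n+1$, whence $\alpha k<n+1$. Thus $n<\alpha k<n+1$, that is $n=[\alpha k]$, which closes the argument. I do not expect any genuine obstacle here — the lemma is routine floor-function bookkeeping — the only points that need care being the two places where irrationality of $\alpha$ upgrades a non-strict inequality to a strict one, and the use of $\alpha>1$ both in the length-of-interval remark and in deriving $\frac{n+1}{\alpha}<k+1$.
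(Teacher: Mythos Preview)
Your proof is correct. The paper does not supply its own argument for this lemma: it simply quotes the statement from \cite[proof of Lemma~9.1.3]{as1} and uses it as a black box. Your write-up is the standard direct verification via the chain of inequalities $n<\alpha k<n+1\iff \frac{n}{\alpha}<k<\frac{n+1}{\alpha}$, and you correctly identify the two places where the hypotheses are actually used: $\alpha>1$ is needed to force the interval $(n/\alpha,(n+1)/\alpha)$ to have length $<1$ (so that $k=[\frac{n}{\alpha}]+1$ is the only candidate) and to obtain $\frac{n+1}{\alpha}<k+1$; irrationality of $\alpha$ is needed to turn $\alpha k\le n+1$ into $\alpha k<n+1$ in the converse direction. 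There is nothing to compare approach-wise, since the paper outsources the proof; yours is exactly the elementary argument one finds in the cited reference.
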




In the following lemma we rewrite over all positive integers up to $[ mb]$, the $m^{\text{th}}$ partial sum associated to the 
series in Theorem \ref{thbrassthm1}. ( Note that the statement of the lemma is not symmetric in $x$ and $y$.)

\begin{lem}
\label{thbrasslem5} Let $a$ be the golden ratio ($a=\frac{1+\sqrt{5}}{2}$) and $b$ be the golden ratio squared ($b=a^2$). 
Let $x=[ m b ]$, and
let $f(n)$ and $g(n)$ be given by \eqref{thbrass6} and  \eqref{thbrass6a}, respectively. Then, $H=H_1-H_2$ where $H_1$ and $H_2$,
are given by
\begin{equation}
 \label{thbrass7}
     H_1= \sum_{n=1}^{x} \left(\frac{2f(n)f([\frac{n}{a}]+1)}{n} + \frac{f(n)g([\frac{n}{a} ]+1)}{n}\right)
\end{equation}
and
\begin{equation}
\label{thbrass8}
    H_2= \sum_{n=1}^{x} \left(\frac{3g(n)f([\frac{n}{a}]+1)}{n} + \frac{2g(n)g([\frac{n}{a} ]+2)}{n}\right)\text{.}
\end{equation}

\end{lem}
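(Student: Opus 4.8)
The goal is to take the partial sum
$$H = \sum_{\substack{n\le m\\ n\in A}}\Bigl(\frac{2}{a_n}-\frac{3}{b_n}\Bigr) + \sum_{\substack{n\le m\\ n\in B}}\Bigl(\frac{1}{a_n}-\frac{2}{b_n}\Bigr),$$
which by Lemma \ref{thbrasslem3} equals $H_1' - H_2'$, and re-index each of the four sums so that it runs over *all* integers $n$ from $1$ to $x=[mb]$, with indicator functions picking out exactly the terms that were present before. So the plan is: handle $H_1'$ first, producing $H_1$; then handle $H_2'$ analogously, producing $H_2$; then observe $H = H_1 - H_2$.

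**Re-indexing $H_1'$.** The first sum in $H_1'$ ranges over $a_n$ with $n\le m$ such that $a_n = [ka]$ for some $k\in A$. I would change the summation variable from the old index to the *value* $v := a_n$. As $v$ runs over $1,\dots,[ma]=y$, the condition "$v\in A$'' is detected by $f_{1/a}(v)$ (this is exactly \eqref{thbrass4}: $g'_a(v)=f_{1/a}(v)$), and truncating at $v\le y$ is built into the definition \eqref{thbrass6} of $f$, so the factor "$v\in A$, $v\le y$'' becomes just $f(v)$. Next I need to detect that the $k$ with $v=[ka]$ satisfies $k\in A$. By Lemma \ref{thbrasslem4}, that $k$ is exactly $[v/a]+1$, so "$k\in A$'' is the statement $f_{1/a}([v/a]+1)=1$; truncating appropriately this is $f([v/a]+1)$. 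Thus the first sum in $H_1'$ becomes $\sum_{v=1}^{y} \tfrac{2 f(v) f([v/a]+1)}{v}$, and since $f(v)=0$ for $v>y$ we may extend the upper limit to $x$. For the second sum in $H_1'$ the index $k$ lies in $B$ instead; "$k\in B$'' with $k=[v/a]+1$ is detected by $g_{1/b}([v/a]+1)$, i.e. by $g([v/a]+1)$ after truncation. This yields $\sum_{v=1}^{x}\tfrac{f(v)g([v/a]+1)}{v}$, and adding gives exactly $H_1$ as in \eqref{thbrass7}.

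**Re-indexing $H_2'$ and finishing.** The same maneuver applies to $H_2'$, now with $v := b_n$ ranging up to $[mb]=x$. Detecting "$v\in B$'' gives the factor $g_{1/b}(v)=g(v)$ (truncated at $x$, which is the natural upper limit here — this is why $x$, not $y$, is the limit of summation, and why the statement is not symmetric in $x$ and $y$). For the first sum of $H_2'$ the multiplier $k$ lies in $A$; by Lemma \ref{thbrasslem4} applied to the base $b$, $k=[v/b]+1$, but one must check that this can be rewritten using the *same* inner argument $[v/a]+1$ that appears in $H_1$ — this is where the special relation $b=a^2=a+1$ and $\{a\}=\{b\}$ is used, together with the identity \eqref{thbrass4a}/\eqref{thbrass5} for the inverse maps; I expect the bookkeeping here (tracking whether the inner bracket is $[v/a]+1$ or $[v/a]+2$, which is where the "$+2$'' in the last term of \eqref{thbrass8} comes from) to be the main obstacle. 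Detecting "$k\in A$'' gives $f$, "$k\in B$'' gives $g$, and the two multipliers $3$ and $2$ are carried along from $H_2'$, producing $H_2$ as in \eqref{thbrass8}. Finally, since the maps $n\mapsto a_n$ and $n\mapsto b_n$ are injective and the indicator functions were designed precisely to cut back down to the original ranges, no term is gained or lost, so $H = H_1' - H_2' = H_1 - H_2$, completing the proof.
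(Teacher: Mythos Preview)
Your approach is exactly the paper's: re-index each sum in $H_1',H_2'$ by the \emph{value} $v=a_n$ (resp.\ $v=b_n$), use the truncated indicators $f,g$ from Definition~\ref{thbrassdef3} to detect membership in $A$ or $B$, and invoke Lemma~\ref{thbrasslem4} to recover the original index. The paper only writes out the $H_1$ case and declares $H_2$ ``similar'', so your treatment of $H_1$ is a faithful expansion of what is there.

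Your worry about $H_2$, however, is misdirected. You correctly observe that if $v=b_k$ then Lemma~\ref{thbrasslem4} gives $k=[\tfrac{v}{b}]+1$, and you then anticipate having to convert this into the $[\tfrac{v}{a}]+1$ that appears in \eqref{thbrass8} via the golden-ratio relation $b=a+1$. Do not attempt this: the $a$ inside the brackets in \eqref{thbrass8} is simply a typo for $b$. When the paper actually applies the lemma in \eqref{thbrass29} it explicitly takes ``$c=b$'' and writes the inner argument as $[\tfrac{n}{b}]$, and a direct check (e.g.\ $v=5=b_2$ gives $[\tfrac{5}{a}]+1=4\ne 2$) shows the two expressions are not equal. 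So the ``main obstacle'' you flag is not an obstacle at all---write $H_2$ with $[\tfrac{n}{b}]+1$ and move on. (The ``$+2$'' in the last term of \eqref{thbrass8} is likewise suspect; your derivation naturally produces $+1$ there as well, and again the discrepancy is immaterial for the application in Section~6, where only the coefficient of $\log m$ matters.)
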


\begin{proof}
We will only provide the proof for $H_1$ because the proof for $H_2$ can be done in a similar manner. In this case, we can assume that $n\leq y$, since
$f(n)=0$ whenever $n>y$, by \eqref{thbrass5} and Definition \ref{thbrassdef3} . For the first term in $H_1$ it follows that $n=a_r \text{, for} \hspace{1mm} r \in A$ and $n\leq y$
 if and only if $f(n)f(r)=1$.
From Lemma \ref{thbrasslem4}, we find that $r=[ \frac{n}{a}
]+1$. Thus, the first summand in $H_1$ corresponds to the first sum in $H_1'$. Similarly
$n=a_r$, for $r \in B$ and $n\leq y$ if and only if $ f(n)g(r)=1$ and $r=[ \frac{n}{b}
]+1$ by Lemma \ref{thbrasslem4}. Hence the second summand of $H_1$ corresponds to the second sum in $H_1'$.
\end{proof}

Now that we have written each of the two partial sums in $H$ over all the integers up to
$x$, we are going to use partial summation to
estimate each of these four sums. This will complete the proof of
Theorem \ref{thbrassthm1}. But first we need to prove some auxiliary propositions, dealing with general partial
sums. We then apply them to the desired cases.

\section{Counting Special Elements of Beatty Sequences}

Applying summation by parts is one of the important ingredients in the
proof of the main theorem of this paper and its particular case. The terms of the sums
we work with are usually quotients involving characteristic
functions (say $f$ and $g$) of irrational numbers. They are of the
form $\displaystyle \frac{f(n)g([ \frac {n}{c}]+1)}{n}$ or
$\displaystyle \frac{f(n)}{n}$, or a combination of both. So, we start here by obtaining an expression for such types
of summatory functions. Then in Theorem \ref{thbrassthm3}, we find an asymptotic
formula for the $m^{\text{th}}$ partial sum whose terms are these types of
quotients.

 The first partial sum we find tells how many elements there are in a
Beatty sequence up to some given number $t$, and is presented in the following lemma. The first part of this lemma was proved earlier,
but we include it here for completion purpose.
\begin{lem}
\label{thbrasslem6a}
 Let $0<c<1$ be any irrational number, and let $f(n)= f_{c}(n)$.
 Then
\begin{equation}
\label{thbrass9}
 \sum_{n\leq t} f(n)=[ c \left([ t] +1\right)] \text{.}
\end{equation}
If $t=x=[ \frac{m}{c}]$, then
\begin{equation}
 \label{thbrass10}
\sum_{n\leq x} f(n)=m\text{.}
\end{equation}
\end{lem}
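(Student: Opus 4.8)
The plan is to recognize that the first assertion is just equation \eqref{thbrass2} with $\alpha=c$, specialized to the case where the summation variable runs over all positive integers $n\le t$ rather than $n\le m$. Since $f(n)=f_c(n)=[c(n+1)]-[cn]$ by Definition \ref{thbrassdef1}, the sum $\sum_{n\le t}f(n)$ telescopes: writing $N=[t]$ for the largest integer not exceeding $t$, we get $\sum_{n=1}^{N}\bigl([c(n+1)]-[cn]\bigr)=[c(N+1)]-[c\cdot 1]$, but $[c]=0$ because $0<c<1$, so the sum equals $[c(N+1)]=[c([t]+1)]$. This is exactly \eqref{thbrass9}. The only point to be careful about is that the upper limit $t$ need not be an integer, which is why $[t]+1$ appears rather than $t+1$; one should note explicitly that $f(n)$ is only defined (and only summed) at integers $n$, so the sum over $n\le t$ is the same as the sum over $n\le[t]$.

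For the second assertion, I would substitute $t=x=[m/c]$ into \eqref{thbrass9} and show $[c([x]+1)]=m$. Since $x$ is already an integer, $[x]+1=x+1$, so we must evaluate $[c(x+1)]$ where $x=[m/c]=[m\beta]$ with $\beta=1/c>1$. Here the relevant identity is precisely \eqref{thbrass4a}–\eqref{thbrass5} (or equivalently Lemma \ref{thbrasslem4}): writing $m/c = m\beta = x+\{m\beta\}$ with $0<\{m\beta\}<1$ (strict because $\beta$ is irrational), we have $c(x+1)=c\bigl(m\beta-\{m\beta\}+1\bigr)=m+c\bigl(1-\{m\beta\}\bigr)$, and since $0<c<1$ and $0<1-\{m\beta\}<1$ the correction term $c(1-\{m\beta\})$ lies strictly between $0$ and $1$. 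Therefore $[c(x+1)]=m$, giving \eqref{thbrass10}. Alternatively, one can invoke \eqref{thbrass4a} directly with the roles of $a,b$ replaced by $\beta$.

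I do not anticipate a genuine obstacle here; the lemma is essentially bookkeeping built on the telescoping identity \eqref{thbrass2} and the two-factors-less-than-one trick already used in \eqref{thbrass4a}. The only mild subtlety worth stating carefully is the distinction between the real cutoff $t$ and its floor $[t]$ in the first part, and the use of the irrationality of $1/c$ to guarantee $\{m/c\}\ne 0$ (so that the correction term is strictly positive and the floor does not drop to $m-1$) in the second part.
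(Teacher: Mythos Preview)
Your argument is correct and follows exactly the paper's approach: the paper's proof simply cites \eqref{thbrass2} for the telescoping identity giving \eqref{thbrass9}, and then invokes \eqref{thbrass4a} (the ``two factors less than one'' computation) to obtain \eqref{thbrass10}. You have merely filled in the details that the paper leaves implicit, including the observation $[c]=0$ and the irrationality guaranteeing $\{m/c\}>0$.
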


\begin{proof}
By \eqref{thbrass2},  the left hand side of \eqref{thbrass9} gives $[ c \left([ x] +1
\right)]$. And this gives $m$ by \eqref{thbrass4a}.
\end{proof}

We also need to count the elements of a given Beatty sequence that come from multiplying by
elements of another Beatty sequence. The lemma below involves sums encoding this type of selection of numbers. For $0<c<1$ and $0<d<1$, it counts the elements up to $t$
that are in the Beatty sequence with slope $1/c$ and come from multiplying by elements in the Beatty sequence with slope $1/d$.

\begin{lem}
\label{thbrasslem6}
 Let $0<d<1$ and $0<c<1$ be any irrational numbers, and let $f(n)= f_{c}(n)$ and $g(n)= g_{d}(n)$
 be given as in Definition \ref{thbrassdef3}.  Then 
 \begin{align}
\label{thbrass11}
            \sum_{n\leq t} f(n)g([ c\,n]+1) = [ d\left([ c \left([
t] +1 \right)] +1\right)]\text{.}
 \end{align}
\end{lem}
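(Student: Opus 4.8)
The plan is to interpret both sides combinatorially and reduce the double-indicator sum on the left to a single counting problem that Lemma \ref{thbrasslem6a} already handles. First I would observe that, by \eqref{thbrass4}, the factor $g([cn]+1) = g_d([cn]+1) = f_{1/d}([cn]+1)$, but the more useful observation is the characterization of membership: $f(n) = f_c(n) = 1$ exactly when $n = [k/c]$ for some integer $k$, i.e. when $n$ lies in the Beatty sequence with slope $1/c$, and in that case Lemma \ref{thbrasslem4} (applied with $\alpha = 1/c$) gives that the corresponding index is $k = [cn] + 1$. So the product $f(n)\,g([cn]+1)$ equals $1$ precisely when $n$ is in the Beatty sequence of slope $1/c$ \emph{and} its index $k = [cn]+1$ is itself in the Beatty sequence of slope $1/d$, i.e. $k = g_d$-admissible. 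Thus the left-hand side counts the integers $n \le t$ of the form $n = [\,(\,[j/d\,]\,)/c\,]$ — that is, elements of the $1/c$-Beatty sequence whose position index runs over the $1/d$-Beatty sequence.

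Next I would run the count in two stages. Since $f(n) = f_c(n)$ telescopes, summing $f(n)$ alone over $n \le t$ gives, by \eqref{thbrass9}, the value $N := [\,c\,([t]+1)\,]$; this $N$ is exactly the number of admissible indices $k$, and because the $k$-values are $k = 1, 2, \dots, N$ in order (the index increases by $1$ each time we hit a new element of the Beatty sequence), restricting further to those $k$ with $g_d(k) = 1$ is the same as counting integers $k \le N$ lying in the Beatty sequence of slope $1/d$. Applying Lemma \ref{thbrasslem6a} a second time — now with the irrational $d$ in place of $c$ and the truncation point $N$ in place of $t$ — that count is $[\,d\,([N]+1)\,] = [\,d\,(N+1)\,]$ since $N$ is already an integer. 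Substituting $N = [\,c\,([t]+1)\,]$ yields exactly the right-hand side $[\,d\,(\,[\,c\,([t]+1)\,] + 1\,)\,]$.

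The one point that needs genuine care — and which I expect to be the main obstacle — is justifying that the admissible indices $k$ arising as $n$ ranges over $\{1, \dots, t\}\cap(\text{Beatty sequence of slope }1/c)$ are precisely the \emph{initial segment} $\{1, 2, \dots, N\}$ of the positive integers, with no gaps and nothing beyond $N$. This is where Lemma \ref{thbrasslem4} does the real work: it gives a bijection between elements $n$ of the Beatty sequence and their indices $k$, monotone in $n$, so the first $N$ elements $n \le t$ correspond exactly to indices $k = 1, \dots, N$; the value $N = [\,c\,([t]+1)\,]$ from \eqref{thbrass9} simply records how many there are. Once that identification is clean, both invocations of Lemma \ref{thbrasslem6a} are mechanical, and the only remaining step is to note $[\,N\,] = N$ because $N \in \mathbf{Z}$.
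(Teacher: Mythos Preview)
Your proposal is correct and follows essentially the same approach as the paper's proof: interpret $f(n)g([cn]+1)=1$ via Lemma~\ref{thbrasslem4} as the condition that $n$ lies in the Beatty sequence of slope $1/c$ with index $k=[cn]+1$ lying in the Beatty sequence of slope $1/d$, then apply the count of Lemma~\ref{thbrasslem6a} (equivalently, the remark after \eqref{thbrass2}) twice---first to get $N=[c([t]+1)]$, then to count indices $k\le N$ in the second sequence. Your explicit justification that the indices form the initial segment $\{1,\dots,N\}$ is a point the paper leaves implicit, but otherwise the arguments coincide.
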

\begin{remark}
 As an illustration of this lemma, consider the following example. In the first $13$ elements of the Beatty sequence
 generated by $\phi$, where $\phi$ is the golden ratio,  i.e., 
$$1,3,4,6,8,9,11,12,14,16,17,19,21\text{,}$$ 
there are five numbers that come from multiplying by an element of the complementary sequence: $3,8,11,16,21$.
They are given by $[ 2  \phi ], [ 5  \phi ], [ 7  \phi
], [ 10  \phi ]$ and  $[ 13  \phi ] $. In other words, these numbers come from multiplying by elements in the Beatty  
complement, namely from multiplying by $2,5,7,10,13$, respectively. This quantity is encoded in the sum
$$\sum_{n\leq 21} f(n)g\left(\left[ \frac{1}{\phi}\,n\right]+1\right) = \left[ \frac{1}{\phi^2}\left([ \frac{1}{\phi} \left([
21] +1 \right)] +1\right)\right]=5\text{,}$$
\noindent and five is precisely the number one would expect according to the explanation given above.
\end{remark}
\begin{proof}
Let $C= \left( [ (1/c) n
]\right)_{n=1}^\infty$ and $D= \left( [ (1/d) n ]\right)_{n=1}^\infty$. Then, by \eqref{thbrass4} 
$f$ and $g$ are the indicator functions of the Beatty sequences $D$ and $C$.
Note that $f(n)g([ nc]+1)=1$ if and only if $n\in C$ and $([ nc ]+1) \in D$, by the definition of $f$ and $g$. 
In other words, $n$ is of the form $c_r$ with $r\in D$. By Lemma \ref{thbrasslem4}, we see that $r=[ (nc) ]+1$.
 Now by the remark following \eqref{thbrass2}, there are $s:=[ c \left([ t] +1\right)]$ integers up to $t$ for which $f(n)=1$. From these
integers, by the same remark, only $[ d(s+1) ]$ are of the form $[ (1/d) k ]$. Hence, the lemma holds.
\end{proof}

\begin{remark}
 \label{thbrassrem2}
Lemma \ref{thbrasslem6a} tells us that
$$[ c \left([ t] +1\right)]=\sum_{n\leq t} f(n)=\sum_{n\leq [ t ]} f(n)\text{,}$$
and thus, we can apply Lemma \ref{thbrasslem6a} to Lemma \ref{thbrasslem6} to see that 
$$\sum_{n\leq t} f(n)g([ c\,n]+1) = [ d([ c \left([ t] +1)] +1\right)] = \sum_{n\leq [ c \left([ t] +1 \right)] } g(n)\text{.}$$
Hence 
\begin{align}
\label{thbrass12ab}
\sum_{n\leq [ c \left([ t] +1 \right)] } g(n+k) &=  \sum_{k < n\leq [ c \left([ t] +1 \right)] +k} g(n)\\\nonumber 
                   &= \sum_{n\leq [ c \left([ t] +1 \right)] +k} g(n)-\sum_{n \leq k}g(n)\\\nonumber
 &=[ d\left([ c \left([ t] +1 \right)] +k +1\right)]  -[ d \left(k +1\right)]\text{.}
\end{align}
\end{remark}
We are now ready to generalize the previous lemma. Given two Beatty sequences $A$ and $B$, with characteristic sequences 
$f$ and $g$ respectively, and given a nonnegative integer $k$, we can count integers $n$ that are less than a real number 
$t$ and that also satisfy the following two conditions: first, $n$ is in $A$, and second, if $n=[ a_r ]$, 
then the integer $r+k$ is an element of $B$. Note also that this generalizes the previous lemma, because if $k=0$, then
this integer $n$ is an element of $A$ that is found by multiplying by an element of $B$. We find this type of partial sum next.

\begin{lem}
 \label{thbrasslem7}
 Let $0<d<1$ and $0<c<1$ be any irrational numbers, and let $f(n)= f_{c}(n)$ and $g(n)= g_{d}(n)$.
 Then, for any nonnegative integer $k$ we have 
 \begin{align}
\label{thbrass12}
            \sum_{n\leq t} f(n)g([ c\,(n)]+k+1) = [ d([ c \left([
t] +1 \right)]+ k+1)]- [ d( k+1)]\text{.}
 \end{align}
\end{lem}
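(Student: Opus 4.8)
The plan is to run the combinatorial argument of Lemma~\ref{thbrasslem6}, but carrying the extra shift $+k$ through, and then to read off the closed form from \eqref{thbrass12ab} in Remark~\ref{thbrassrem2}.

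First I would set up the same dictionary as in the proof of Lemma~\ref{thbrasslem6}: put $C=\bigl([c^{-1}r]\bigr)_{r=1}^{\infty}$ and $D=\bigl([d^{-1}r]\bigr)_{r=1}^{\infty}$. By Lemma~\ref{thbrasslem4} applied with $\alpha=c^{-1}$, an integer $n$ lies in $C$ exactly when $f(n)=1$, and in that case $n=c_r$ with $r=[cn]+1$; that is, $[cn]+1$ is the position of $n$ inside $C$. Likewise $g(m)=1$ exactly when $m\in D$. Hence $f(n)\,g([cn]+k+1)=1$ if and only if $n=c_r$ for some $r$ with $r+k\in D$, i.e.\ with $g(r+k)=1$.

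Next I would reindex. Since $c^{-1}>1$, the sequence $C$ is strictly increasing, so the integers $n\le t$ with $f(n)=1$ are exactly $c_1<c_2<\dots<c_s$, where $s:=[\,c([t]+1)\,]$ by \eqref{thbrass9} of Lemma~\ref{thbrasslem6a}. Therefore
\[
  \sum_{n\le t} f(n)\,g\bigl([cn]+k+1\bigr)=\sum_{r\le s} g(r+k)=\sum_{n\le [\,c([t]+1)\,]} g(n+k).
\]
Finally, \eqref{thbrass12ab} evaluates the last sum as $[\,d([\,c([t]+1)\,]+k+1)\,]-[\,d(k+1)\,]$, which is precisely the right-hand side of \eqref{thbrass12}; specializing $k=0$ recovers Lemma~\ref{thbrasslem6}.

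The only delicate point — already present in Lemma~\ref{thbrasslem6} — is the twofold use of Lemma~\ref{thbrasslem4}: once to recognize that $f$ detects membership in $C$, and once to identify $[cn]+1$ with the index of $n$ in $C$, so that replacing the argument $[cn]+1$ of $g$ by $[cn]+k+1$ genuinely amounts to shifting that index by $k$. Everything past that is bookkeeping already packaged in \eqref{thbrass9} and \eqref{thbrass12ab}, so I do not expect any further obstacle.
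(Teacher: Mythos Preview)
Your proposal is correct and follows essentially the same route as the paper: both arguments use Lemma~\ref{thbrasslem4} (together with \eqref{thbrass4}) to identify $f(n)g([cn]+k+1)=1$ with the condition that $n=c_r$ and $r+k\in D$, reindex the sum as $\sum_{r\le [c([t]+1)]}g(r+k)$, and then read off the closed form from \eqref{thbrass12ab}. Your write-up is slightly more explicit about the dictionary between $f(n)=1$ and membership in $C$, but the logic is identical.
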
 

\begin{proof}
In light of \eqref{thbrass4}, $f(n)g([ c\,(n)]+1)$ equals $1$ if both $n=[ j /c ]$ 
and $j$ is in the Beatty sequence with slope $1/d$ (note that $j=[ c n ] +1$ by Lemma \ref{thbrasslem4}). 
Analogously, $f(n)g([ c\,(n)]+k+1)$ gives $1$ if $n=[ j /c ]$ and $j+k$ is in the Beatty 
sequence with slope $1/d$. Since there are $[ c([ t ]+1) ]$ integers up to $t$ in the 
Beatty sequence with slope $1/c$, we have
\begin{equation}
 \sum_{n\leq t} f(n)g([ c\,(n)]+k+1) = \sum_{j\leq [ c \left([
t] +1 \right)] } g(j+k)\text{.}
\end{equation}

\noindent By \eqref{thbrass12ab}, the last expression gives precisely the right hand side of \eqref{thbrass12}.
\end{proof}

\section{Some Useful Integrals and Sums}
For $c>1$ and $d>1$ real numbers, let $F_1(t)$ and $F_2(t)$ be given by 
\begin{equation}
\label{thbrass11d}
 F_1(t)=[ c ^{-1}\left([ t] +1 \right)]
\end{equation}
and
\begin{equation}
\label{thbrass12a}
 F_2(t)=[ d^{-1}([ c ^{-1}\left([ t] +1 \right)] +1)] \text{.}
\end{equation}
Now rewrite $F_1(t)$ and $F_2(t)$ using $[ x ]= x - \{ x\}$ repeatedly to obtain
\begin{equation}
 F_1(t)= \frac{1}{c} (t+1-\{t\}) - 
\left\{c ^{-1}\left([ t] +1 \right)\right\}\text{,}
\end{equation}
\noindent and 
\small  
\begin{align}
&F_2(t)= \\\nonumber
&(cd)^{-1} (t+1- \{ t \}) - d^{-1}\left\{c
^{-1}\left([ t] +1 \right)\right\} +d^{-1} -\left\{
d^{-1}\left(\left[ c^{-1}([ t ] +1)\right]+1\right)\right\}\text{.} 
\end{align}

\normalsize

\begin{lem}
\label{thbrasslem8}
 For any irrational number $c>1$, and a number $x$ given by $x=x(c,m)=[ c m]$ we have
 \begin{equation} 
   \label{thbrass20} 
       \displaystyle {\int_1^x \frac{[ c ^{-1}\left([ t] +1 \right)]}{t^2}dt = \frac{1}{c} 
(\log m + \log c + \gamma) -\sum_{n=1}^\infty \frac{\left\{c 
^{-1}\left(n+1 \right)\right\}}{n(n+1)}  + O\left(\frac{1}{m}\right)}\text{,} 
\end{equation}
where $\gamma$ is Euler's constant.
\end{lem}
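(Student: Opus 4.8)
The plan is to substitute into the integrand the closed form for $F_1(t)=[c^{-1}([t]+1)]$ derived just above the statement, namely
$$F_1(t)=\frac{1}{c}\bigl(t+1-\{t\}\bigr)-\left\{c^{-1}\left([t]+1\right)\right\}\text{,}$$
and then integrate the three resulting pieces of $F_1(t)/t^2$ over $[1,x]$ separately. For the smooth piece $\frac{1}{c}(t+1)/t^2=\frac{1}{c}(1/t+1/t^2)$ we get $\int_1^x=\frac{1}{c}\bigl(\log x+1-1/x\bigr)$. Since $x=[cm]=cm-\{cm\}$ with $0<\{cm\}<1$, we have $\log x=\log(cm)+\log\bigl(1-\{cm\}/(cm)\bigr)=\log m+\log c+O(1/m)$ and $1/x=O(1/m)$, so this piece contributes $\frac{1}{c}(\log m+\log c)+\frac{1}{c}+O(1/m)$.

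For the piece $-\frac{1}{c}\{t\}/t^2$ I would invoke the classical identity $\int_1^{\infty}\{t\}\,t^{-2}\,dt=1-\gamma$ (which follows from $\int_1^N\{t\}\,t^{-2}\,dt=\log N-\bigl(\sum_{k=2}^N 1/k\bigr)=1-(\sum_{k=1}^N 1/k-\log N)$ and letting $N\to\infty$), together with the trivial tail bound $\int_x^{\infty}\{t\}\,t^{-2}\,dt=O(1/x)=O(1/m)$. So this piece contributes $-\frac{1}{c}(1-\gamma)+O(1/m)$, and adding it to the previous piece makes the spurious $\frac{1}{c}$ terms cancel, leaving exactly $\frac{1}{c}(\log m+\log c+\gamma)+O(1/m)$.

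For the remaining piece $-\{c^{-1}([t]+1)\}/t^2$ the key observation is that $x=[cm]$ is an integer and that $t\mapsto\{c^{-1}([t]+1)\}$ is constant on each interval $[n,n+1)$, with value $\{c^{-1}(n+1)\}$. Hence
$$\int_1^x\frac{\{c^{-1}([t]+1)\}}{t^2}\,dt=\sum_{n=1}^{x-1}\{c^{-1}(n+1)\}\int_n^{n+1}\frac{dt}{t^2}=\sum_{n=1}^{x-1}\frac{\{c^{-1}(n+1)\}}{n(n+1)}\text{,}$$
and the omitted tail satisfies $\sum_{n\ge x}\{c^{-1}(n+1)\}/(n(n+1))=O\bigl(\sum_{n\ge x}n^{-2}\bigr)=O(1/x)=O(1/m)$, so this piece equals $-\sum_{n=1}^{\infty}\{c^{-1}(n+1)\}/(n(n+1))+O(1/m)$. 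Summing the three contributions yields the asserted formula.

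There is no genuine obstacle here: the argument is a routine decompose-and-estimate once the closed form for $F_1$ from \eqref{thbrass11d} is in hand. The only points needing a little care are (a) replacing $\log x$ by $\log m+\log c$ with an admissible error, which rests on $0<\{cm\}<1$ so that $x\asymp m$, and (b) recalling and justifying $\int_1^{\infty}\{t\}\,t^{-2}\,dt=1-\gamma$, which is precisely where Euler's constant enters; the rest is telescoping and crude tail estimates.
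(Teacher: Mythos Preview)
Your proposal is correct and follows essentially the same approach as the paper: both substitute the closed form for $F_1(t)$, separate the $t/(ct^2)$ piece to produce $\log x=\log m+\log c+O(1/m)$, use the identity $\int_1^\infty \{t\}\,t^{-2}\,dt=1-\gamma$ (the paper packages this as $\int_1^\infty (1-\{t\})\,t^{-2}\,dt=\gamma$ with a reference to Apostol), and convert the step-function piece $\{c^{-1}([t]+1)\}/t^2$ into the series $\sum_n \{c^{-1}(n+1)\}/(n(n+1))$ with an $O(1/m)$ tail. The only cosmetic difference is that you split $t+1-\{t\}$ as $(t+1)-\{t\}$ and handle the finite sum before completing the tail, whereas the paper groups it as $t+(1-\{t\})$ and completes the tails first.
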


\begin{proof}
Integrating $F_1$ we see that
\begin{align}
\int_1^x \frac{F_1(t)}{t^2}dt&= \int_1^x\frac{\frac{1}{c} (t+1-\{t\}) - \left\{c ^{-1}\left([ t] +1 \right)\right\}}{t^2}dt\\\nonumber
                &=\left. \frac{1}{c}\log x \right|_1^x + \int_1^x \frac{\frac{1}{c}(1-\{t\}) - \left\{c ^{-1}\left([ t] +1 \right)\right\}}{t^2}dt\text{.}
\end{align}

\noindent If we complete the tails of the integral we have
\begin{align}      
\int_1^x \frac{F_1(t)}{t^2}dt&=\left. \frac{1}{c}\log t \right|_1^x+ 
\frac{1}{c}\int_1^\infty \frac{1-\{t\}}{t^2}dt - \int_1^\infty \frac{ 
 \left\{c ^{-1}\left([ t] +1 
\right)\right\}}{t^2}dt \nonumber \\
& -\int_x^\infty \frac{\frac{1}{c}(1-\{t\}) - 
\left\{c ^{-1}\left([ t] +1 \right)\right\}}{t^2}dt\text{.}\nonumber
\end{align}

      The first integral in the right hand side equals $\frac{1}{c} \gamma$ (see \cite[Page 56]{apostol}). The third is 
$O\left( \frac{1}{x}\right)$, and given that $x=[ c m ]$,
this is really  $O\left( \frac{1}{m}\right)$.
      Also, for the same reason, $\log x = \log ( c m - \{ c m \}) =
\log (cm) + O(\frac{1}{m}) = \log c +\log m + O (\frac{1}{m})$. Hence we have
\begin{equation} 
         \label{thbrass21}
             \displaystyle {\int_1^x \frac{F_1(t)}{t^2}dt =
\frac{1}{c} (\log m + \log c + \gamma) + \int_1^\infty \frac{ 
\left\{c ^{-1}\left([ t] +1 \right)\right\}}{t^2}dt +
O\left(\frac{1}{m}\right)}\text{.}
\end{equation}

      We can write the infinite integral in \eqref{thbrass21} as an
infinite sum as follows:
\begin{align}      
\int_1^\infty \frac{ \left\{c ^{-1}\left([ t] +1 \right)\right\}}{t^2}dt&= \sum_{n=1}^\infty \int_0^1 \frac{\left\{c 
^{-1}\left(n +1 \right)\right\}}{(n+t)^2}dt \nonumber \\ 
 & = \sum_{n=1}^\infty \left\{c ^{-1}\left(n+1 \right)\right\} \int_0^1\frac{1}{(n+t)^2}dt\\\nonumber 
 &=\sum_{n=1}^\infty \frac{\left\{c ^{-1}\left(n+1 \right)\right\}}{n(n+1)}\text{.}\nonumber
\end{align}
      
\noindent Thus we see that
      \begin{equation*}\displaystyle {\int_1^x \frac{F_1(t)}{t^2}dt = \frac{1}{c} 
(\log m + \log c + \gamma) -\sum_{n=1}^\infty \frac{\left\{c 
^{-1}\left(n+1 \right)\right\}}{n(n+1)}  +
O\left(\frac{1}{m}\right)}\text{.}\qedhere
\end{equation*}
\end{proof}

\begin{lem}
\label{thbrasslem9}
   Let $r_n'$ be given by $r_n'=[ (1/r)n  ]$ for $r$ a real number.
 For any irrational numbers $c>1$ and $d>1$, and a number $x$ given by $x=x(c,m)=[ c m]$, we have
 \begin{align}
     \label{thbrass22}
\int_1^x &\frac{[ d^{-1}([ c ^{-1}\left([ t] +1 \right)] +k+1)]}{t^2}dt \\\nonumber
&= (cd)^{-1} (\log m + \log c + \gamma)+d^{-1}(k+1)\\\nonumber
& -\sum_{n=1}^\infty \frac{d^{-1}\left\{c ^{-1}\left(n+1
\right)\right\}+\{d^{-1}\left( c^{'}_{n+1}+k+1\right)\}}{n(n+1)}\\\nonumber & + O\left(\frac{1}{m}\right)\text{.}\nonumber
\end{align}
\end{lem}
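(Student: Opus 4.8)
The plan is to imitate the proof of Lemma~\ref{thbrasslem8} almost verbatim; the only genuinely new ingredient is the bookkeeping forced by the shift by $k$. First I would derive the closed form, with $k$ present, of the integrand $G_k(t):=[d^{-1}([c^{-1}([t]+1)]+k+1)]$, paralleling the expansion of $F_2(t)$ displayed just before Lemma~\ref{thbrasslem8}. Writing $u:=[c^{-1}([t]+1)]$ and applying $[x]=x-\{x\}$ three times gives
\[
G_k(t)=(cd)^{-1}(t+1-\{t\})-d^{-1}\{c^{-1}([t]+1)\}+d^{-1}(k+1)-\{d^{-1}([c^{-1}([t]+1)]+k+1)\},
\]
which reduces to the stated formula for $F_2(t)$ when $k=0$. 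Note that $[c^{-1}([t]+1)]=c'_{n+1}$ for $t\in[n,n+1)$, in the notation $r'_n=[(1/r)n]$ of the lemma.

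Next I would divide by $t^2$ and integrate from $1$ to $x$ term by term. The first term is handled exactly as in Lemma~\ref{thbrasslem8}: $\int_1^x\frac{t+1-\{t\}}{t^2}\,dt=\log x+\int_1^x\frac{1-\{t\}}{t^2}\,dt$, completing the tail replaces the last integral by $\gamma+O(1/x)$, and $\log x=\log(cm)+O(1/m)=\log m+\log c+O(1/m)$, so this term contributes $(cd)^{-1}(\log m+\log c+\gamma)+O(1/m)$. The constant term $d^{-1}(k+1)$ integrates to $d^{-1}(k+1)(1-1/x)=d^{-1}(k+1)+O(1/m)$ (with the implied constant allowed to depend on $k$, which is harmless since $k$ is fixed, and $x=[cm]\asymp m$). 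For the two fractional-part terms I would complete their tails, incurring errors $O(1/x)=O(1/m)$ since both integrands are bounded by $t^{-2}$; then on each interval $[n,n+1)$ the numerators are constant, equal to $\{c^{-1}(n+1)\}$ and $\{d^{-1}(c'_{n+1}+k+1)\}$ respectively, and $\int_n^{n+1}t^{-2}\,dt=\frac{1}{n(n+1)}$, so these terms become $d^{-1}\sum_{n\ge1}\frac{\{c^{-1}(n+1)\}}{n(n+1)}$ and $\sum_{n\ge1}\frac{\{d^{-1}(c'_{n+1}+k+1)\}}{n(n+1)}$. The interchange of sum and integral is legitimate because the summands are nonnegative and $\sum 1/(n(n+1))$ converges. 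Collecting the four contributions and merging the two series over the common denominator $n(n+1)$ gives exactly \eqref{thbrass22}.

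I do not expect a real obstacle here: the argument is a routine adaptation of Lemma~\ref{thbrasslem8}. The two points that require a little care are (i) isolating the term $d^{-1}(k+1)$ cleanly in the floor expansion of $G_k(t)$, and (ii) keeping track of the fact that it is $c'_{n+1}$, not $c'_n$, that appears in the final series, since $[t]=n$ on $[n,n+1)$ sits inside $[c^{-1}([t]+1)]$. One should also state explicitly that the implied constant in the final $O(1/m)$ may depend on $c$, $d$, and $k$.
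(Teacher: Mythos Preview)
Your proposal is correct and is precisely the approach the paper takes: the paper's own proof consists of the single sentence ``Proceed in a similar manner to how we dealt with Lemma~\ref{thbrasslem8},'' and your write-up carries out exactly that adaptation, including the floor expansion of $G_k(t)$, the tail completions, and the conversion of the fractional-part integrals to the series over $1/(n(n+1))$. Your cautionary notes about the appearance of $c'_{n+1}$ and the $k$-dependence of the implied constant are apt and go slightly beyond what the paper makes explicit.
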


\begin{proof}
Proceed in a similar manner to how we dealt with Lemma \ref{thbrasslem8}.
\end{proof}

\section{Applying Summation by Parts}
\begin{lem}
 \label{thbrasslem10}
Let $c>1$ be any irrational number, $f(n)$ be given in terms of $f_{\{ c\}}(n)$ as in Definition \ref{thbrassdef3}
 and  $c_n=[ c n ]$. If $C= (c_n)_{n=1}^\infty$ and $x=[ mc]$, then
\begin{equation}
  \label{thbrass23}
  \sum_{n=1}^x \frac{f(n)}{n} = \frac{1}{c}+ \frac{1}{c} (\log m + \log c + \gamma) 
                                  -\sum_{n=1}^\infty \frac{\left\{c
^{-1}\left(n+1 \right)\right\}}{n(n+1)}  + O\left(\frac{1}{m}\right)\text{.}
\end{equation}
\end{lem}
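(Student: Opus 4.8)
The plan is to prove Lemma \ref{thbrasslem10} by Abel summation (partial summation), using Lemma \ref{thbrasslem6a} to supply the summatory function of $f$ and Lemma \ref{thbrasslem8} to evaluate the resulting integral. Write $S(t)=\sum_{n\le t}f(n)$; by \eqref{thbrass9} we have $S(t)=F_1(t)=[\,c^{-1}([t]+1)\,]$ in the notation of \eqref{thbrass11d} (here the relevant characteristic function has slope $c^{-1}$, matching the hypothesis that $f$ is built from $f_{\{c\}}$ and $\{c\}^{-1}$ plays the role of $c$ via \eqref{thbrass4}), and by \eqref{thbrass10}, $S(x)=m$.

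First I would set up Abel summation for $\sum_{n=1}^{x}\frac{f(n)}{n}$ with the weight $1/n$, getting
\[
\sum_{n=1}^{x}\frac{f(n)}{n}=\frac{S(x)}{x}+\int_{1}^{x}\frac{S(t)}{t^{2}}\,dt
=\frac{m}{x}+\int_{1}^{x}\frac{[\,c^{-1}([t]+1)\,]}{t^{2}}\,dt .
\]
Next I would handle the boundary term: since $x=[cm]=cm-\{cm\}$, we get $\frac{m}{x}=\frac{m}{cm-\{cm\}}=\frac{1}{c}\bigl(1+O(1/m)\bigr)=\frac{1}{c}+O(1/m)$. Then I would invoke Lemma \ref{thbrasslem8} directly for the integral, which gives
\[
\int_{1}^{x}\frac{[\,c^{-1}([t]+1)\,]}{t^{2}}\,dt=\frac{1}{c}\bigl(\log m+\log c+\gamma\bigr)-\sum_{n=1}^{\infty}\frac{\{c^{-1}(n+1)\}}{n(n+1)}+O\!\left(\frac{1}{m}\right).
\]
Adding the two pieces yields exactly the claimed identity \eqref{thbrass23}, with the two $O(1/m)$ terms absorbed into one.

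The only genuine subtlety — and the step I would be most careful about — is bookkeeping around which "$c$" is which. In Lemma \ref{thbrasslem10}, $f(n)$ is (a truncation of) $f_{\{c\}}(n)$, so by Lemma \ref{thbrasslem6a} its partial sums are governed by the characteristic function of slope $\{c\}$, whose reciprocal is $c$ (since $1/\{c\}=c$ is exactly the relation used in the proof of Lemma \ref{thbrasslem1}); thus the $S(t)$ above is precisely $F_1(t)$ with the parameter called $c$ in \eqref{thbrass11d}, and Lemma \ref{thbrasslem8} applies verbatim with that same $c$. I would also note that the truncation of $f$ (it is $0$ for $n>y$) is harmless here because $y=[ma]\ge x$ in the golden-ratio setting and, more to the point, the sum runs only up to $x$, so $f(n)=f_{\{c\}}(n)$ throughout the range of summation. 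No error-term degradation occurs: the tail completion and the $\log x=\log(cm)+O(1/m)$ estimates are all inherited from Lemma \ref{thbrasslem8}, so the proof is essentially a one-line combination of Abel summation with the two cited lemmas.
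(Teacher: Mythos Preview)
Your approach is exactly the paper's: apply Abel summation, identify the summatory function $S(t)$ with $F_1(t)$ via Lemma~\ref{thbrasslem6a}, read off $S(x)/x=m/x=1/c+O(1/m)$, and invoke Lemma~\ref{thbrasslem8} for the integral. One correction to your bookkeeping: the identity $1/\{c\}=c$ that you invoke holds only for the golden ratio, not for arbitrary irrational $c>1$; the ``$f_{\{c\}}$'' in the statement is a slip for $f_{1/c}$ (compare the hypotheses of Theorem~\ref{thbrassthm3} and the paper's own identification $F(t)=F_1(t)=[c^{-1}([t]+1)]$ in the proof), and with that reading no relation between $\{c\}$ and $c^{-1}$ is needed at all.
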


\begin{remark}
We can rewrite Lemma \ref{thbrasslem10} as
 $$\sum_{n=1}^x \frac{f(n)}{n}=
(c)^{-1} \log m + \alpha_ {c}+ O\left(\frac{1}{m}\right)\text{,} $$
 where $\alpha_ {c}$ is a constant.
\end{remark}
\begin{proof}
 We will apply summation by parts. Let $F(t)=\sum_{n\leq t}f(n)$. Then by Lemma \ref{thbrasslem6a}
and by \eqref{thbrass11d} we see that

\begin{equation}
F(t)=\sum_{n\leq t}f(n)=[ c \left([ t] +1\right)]=F_1(t)\text{.}
\end{equation}
Thus we have
\begin{align}
 \label{thbrass23a}
\sum_{n=1}^x\frac{f(n)}{n}&=\frac{F_1(x)}{x}+\int_1^x \frac{F_1(t)\,dt}{t^2}\text{.}\nonumber
\end{align}
\noindent From the definition of $x$, we see that 
$$\frac{F_1(x)}{x}=\frac1c + O\left(\frac1x \right)=\frac1c + O\left(\frac1m \right)\text{,}$$
The result follows by applying Lemma \ref{thbrasslem8} to the integral in \eqref{thbrass23a}.
\end{proof}

\begin{lem}
 \label{thbrasslem11}
Let $c>1$ be any irrational number, let $d^{-1}=\{c\}$,
 and let $x$ be given by $x=[ m c]$. Define $f$ and $g$ in terms of $f_{1/c}$ and $g_{1/d}$ 
respectively, as in Definition \ref{thbrassdef3}. Then
\begin{align}
 \sum_{n=1}^x &\frac{f(n)g([ \frac {n}{c}]+k+1)}{n} = \\\nonumber 
 &\frac{\{ c\}}{c}+(cd)^{-1} (\log m + \log c + \gamma)+\{d^{-1}(1+k)\}-\\
                               & \sum_{n=1}^\infty \frac{d^{-1}\left\{c ^{-1}\left(n+1
\right)\right\}+\{d^{-1}\left( c^{'}_{n+1}+k+1\right)\}}{n(n+1)}  + 
O\left(\frac{1}{m}\right)\text{.}\nonumber
\end{align}
\end{lem}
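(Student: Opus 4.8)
The plan is to mimic the proof of Lemma \ref{thbrasslem10} almost verbatim, replacing the single summatory function $F_1$ by the two-step summatory function $F_2$ and using Lemma \ref{thbrasslem7} in place of Lemma \ref{thbrasslem6a}. First I would set $F(t)=\sum_{n\le t} f(n)g([n/c]+k+1)$ and invoke Lemma \ref{thbrasslem7} (with $c$ there equal to our $1/c$, i.e. $\{c\}$, and $d$ there equal to our $1/d=\{c\}$ — but note $d^{-1}=\{c\}$ is our hypothesis, so the slopes match up correctly) to identify $F(t)$ with the function $F_2(t)$ of \eqref{thbrass12a}, shifted by $k$. Concretely, Lemma \ref{thbrasslem7} gives
\begin{equation*}
F(t)=[ d^{-1}([ c^{-1}([ t]+1)]+k+1)]-[ d^{-1}(k+1)]\text{,}
\end{equation*}
and the subtracted constant $[ d^{-1}(k+1)]$ will be exactly what converts $d^{-1}(k+1)$ into its fractional part $\{d^{-1}(k+1)\}$ in the final answer.

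Next I would apply Abel summation in the form $\sum_{n=1}^x f(n)g([n/c]+k+1)/n = F(x)/x + \int_1^x F(t)t^{-2}\,dt$. For the boundary term, since $x=[ cm]$, one has $[ c^{-1}([ x]+1)]=m$ by \eqref{thbrass4a}, hence $[ d^{-1}(m+k+1)]/x$; expanding $x = cm + O(1)$ and $[ d^{-1}(m+k+1)] = d^{-1}m + O(1)$ shows this is $(cd)^{-1} + O(1/m)$. Wait — I should double-check the constant: the claimed leading constant is $\{c\}/c = (cd)^{-1}$ since $d^{-1}=\{c\}$, so indeed $F(x)/x \to (cd)^{-1}$, matching the $\{c\}/c$ term in the statement. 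For the integral, I would write $F(t) = F_2(t) - [ d^{-1}(k+1)]$ — but more precisely $F(t)$ is the shifted-by-$k$ analogue of $F_2$, whose explicit expansion in fractional parts is exactly what underlies Lemma \ref{thbrasslem9}. So I would apply Lemma \ref{thbrasslem9} directly to $\int_1^x F_2^{(k)}(t)t^{-2}\,dt$, which yields the $(cd)^{-1}(\log m + \log c + \gamma)$ term, the $d^{-1}(k+1)$ term, and the infinite sum $\sum_n \big(d^{-1}\{c^{-1}(n+1)\} + \{d^{-1}(c'_{n+1}+k+1)\}\big)/(n(n+1))$, plus $O(1/m)$. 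Finally the constant $-[ d^{-1}(k+1)]\int_1^x t^{-2}\,dt = -[ d^{-1}(k+1)](1 - 1/x) = -[ d^{-1}(k+1)] + O(1/m)$ combines with the $d^{-1}(k+1)$ from Lemma \ref{thbrasslem9} to produce $\{d^{-1}(k+1)\}$.

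Collecting all pieces, the boundary term contributes $\{c\}/c$, the integral of $F_2^{(k)}$ contributes the main log term and $d^{-1}(k+1)$ and the minus-infinite-sum, and the integral of the constant tail contributes $-[ d^{-1}(k+1)] + O(1/m)$; adding $d^{-1}(k+1) - [ d^{-1}(k+1)] = \{d^{-1}(1+k)\}$ gives precisely the asserted identity. The step I expect to be the main obstacle is the careful bookkeeping in translating Lemma \ref{thbrasslem7} (whose variables $c,d$ name the slopes $1/c, 1/d$ of the present lemma in reciprocal form) into the hypotheses of Lemma \ref{thbrasslem9} and checking that the shift argument of $g$ is $[n/c]+k+1$ rather than $[cn]+k+1$ — i.e. verifying via Lemma \ref{thbrasslem4} that $r=[n/c]+1$ is the correct index so that the $k$-shift lands on $g$ at the right spot; once that dictionary is fixed, the rest is the routine computation already carried out in Lemmas \ref{thbrasslem8} and \ref{thbrasslem9}.
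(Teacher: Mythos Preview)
Your proposal is correct and follows essentially the same route as the paper: set $F_3(t)=\sum_{n\le t}f(n)g([n/c]+k+1)$, use Lemma~\ref{thbrasslem7} (with the parameters $c,d$ there replaced by $c^{-1},d^{-1}$) to write $F_3(t)=[d^{-1}([c^{-1}([t]+1)]+k+1)]-[d^{-1}(k+1)]$, apply Abel summation, evaluate the boundary term as $\{c\}/c+O(1/m)$, invoke Lemma~\ref{thbrasslem9} for the integral, and combine $d^{-1}(k+1)-[d^{-1}(k+1)]=\{d^{-1}(k+1)\}$. One small slip: in your parenthetical you wrote ``$1/c$, i.e.\ $\{c\}$,'' but $1/c\neq\{c\}$ in general---the correct substitution is simply $c\mapsto c^{-1}$ and $d\mapsto d^{-1}$, which is what your displayed formula for $F(t)$ already reflects.
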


\begin{remark}
 Notice that we can rewrite Lemma \ref{thbrasslem11} as
 $$\sum_{n=1}^x \frac{f(n)g([ \frac {n}{c}]+1)}{n}=
(cd)^{-1} \log m + \alpha_ {c,d}+ O\left(\frac{1}{m}\right)\text{,} $$
 where $\alpha_ {c,d}$ is a constant.
\end{remark}

\begin{proof}
Let $\displaystyle F_3(t)=\sum_{n\leq t}f(n)g([ \frac {n}{c}]+k+1)$. We apply Lemma \ref{thbrasslem7} with $c$ and $d$
replaced by $c^{-1}$ and $d^{-1}$ respectively to see that
\begin{align}
 F_3(t)= [ d^{-1}([ c^{-1} \left([ t] +1 \right)]+ k+1)]- [ d^{-1}( k+1)]=F_2(t)-[ d^{-1}( k+1)]\text{.}
 \end{align}
The last equality follows from \eqref{thbrass12a} where $F_2(t)$ is defined. Upon applying summation by parts we obtain:
\small{\begin{align}
\label{thbrass12c}
\sum_{n=1}^x &\frac{f(n)g([ \frac {n}{c}]+k+1)}{n}= \\\nonumber &\frac{F_3(t)}{x}+\int_1^x\frac{F_3\,dt}{t^2}=\\\nonumber 
 &\frac{F_3(x)}{x}+\int_1^x\frac{[ d^{-1}([ c^{-1} \left([ t] +1 \right)]+ k+1)]}{t^2}dt -\int_1^x\frac{[ d^{-1}( k+1)]}{t^2}dt=\\\nonumber
& \frac{F_3(x)}{x}+\int_1^x\frac{F_2(t)}{t^2}\,dt- [ d^{-1}( k+1)]+ O\left(\frac1x \right)\text{.}
\end{align} }
Since $x=[ m c ]$, it follows that
\begin{align}
 \frac{F_3(x)}{x}=\frac{d^{-1}}{c}+O\left(\frac{1}{x} \right)=\frac{\{c\}}{c}+O\left(\frac{1}{m} \right)\text{.}
\end{align}
Now the integral involving $F_2$ in \eqref{thbrass12c} was evaluated in Lemma \ref{thbrasslem9}. Also we combine the term $d^{-1}(k+1)$ from 
Lemma \ref{thbrasslem9} with the expression $-[ d^{-1}(k+1)]$ in \eqref{thbrass12c} to get $\{d^{-1}(k+1)\}$. The result follows.
\end{proof}

\begin{thm}
\label{thbrassthm3}
Let $c>1$ be any irrational number and set $d= \{c \}^{-1}$ and
$q=c-\{c\}$. Let  $c_n=[ cn ]$, $C= (c_n)_{n=1}^\infty$, $d_n= [ d n ]$ and $D=(d_n)_{n=1}^\infty$.
Also let $ c_n' =  [ c^{-1} n ]$ and $x=x(c,m)=[ m
c]$.  Define $f(n)$ and $g(n)$ in terms of $f_{\frac{1}{c}}(n)$ and $g_{\frac{1}{d}}(n)$ 
respectively, as in Definition \ref{thbrassdef3}. Then 
\begin{align}
  \sum_{n=1}^x &\frac{q f(n)+f(n)g([ \frac {n}{c}]+k+1)}{n} = \\\nonumber
  &1+ \log m +\log   c+ \gamma +\left\lbrace\{c\}(k+1)\right\rbrace- \\ \nonumber 
 & \sum_{n=1}^\infty \frac{c\{ c^{-1} (n+1)\}+\{ \{c\}
(c^{'}_{n+1}+k+1)\}}{n(n+1)}+O\left(\frac{1}{m}\right)\text{,}\nonumber
\end{align}
where $\gamma$ is Euler's constant.
\end{thm}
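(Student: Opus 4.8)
The plan is to split the summand by linearity and then quote the two preceding lemmas. Write
$$\sum_{n=1}^x \frac{q f(n)+f(n)g([ \frac {n}{c}]+k+1)}{n} = q\sum_{n=1}^x \frac{f(n)}{n} + \sum_{n=1}^x \frac{f(n)g([ \frac {n}{c}]+k+1)}{n}\text{.}$$
Since $d=\{c\}^{-1}$ we have $0<d^{-1}<1$ with $d^{-1}$ irrational, so Lemma \ref{thbrasslem11} applies with these $c$ and $d$, and Lemma \ref{thbrasslem10} applies with this $c$. First I would substitute both asymptotic expansions into the right-hand side above.

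The rest is bookkeeping driven by two elementary identities. From $q=c-\{c\}$ we get $q+\{c\}=c$ and hence $\tfrac{q}{c}+\tfrac{\{c\}}{c}=1$; and from $d^{-1}=\{c\}$ we get $(cd)^{-1}=\tfrac{\{c\}}{c}$. With these, the terms combine as follows: the constant $\tfrac{1}{c}$ of Lemma \ref{thbrasslem10} (multiplied by $q$) and the constant $\tfrac{\{c\}}{c}$ of Lemma \ref{thbrasslem11} add to $\tfrac{q+\{c\}}{c}=1$; the logarithmic blocks $\tfrac{q}{c}(\log m+\log c+\gamma)$ and $(cd)^{-1}(\log m+\log c+\gamma)$ add with total coefficient $\tfrac{q+\{c\}}{c}=1$, yielding $\log m+\log c+\gamma$; the boundary term $\{d^{-1}(1+k)\}$ is exactly $\{\{c\}(k+1)\}$; and, writing both infinite series over the common denominator $n(n+1)$, the coefficient of $\{c^{-1}(n+1)\}$ is $q+d^{-1}=c$ while the remaining numerator $\{d^{-1}(c'_{n+1}+k+1)\}$ equals $\{\{c\}(c'_{n+1}+k+1)\}$, so the two series merge into the single series displayed in the statement. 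The two error terms add to an $O(1/m)$, which finishes the identity.

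The only points that need checking are genuinely routine: that the hypotheses of Lemmas \ref{thbrasslem10} and \ref{thbrasslem11} are met (this is where $c>1$, equivalently $q$ being a nonnegative integer, enters), and the elementary identities $q+\{c\}=c$ and $\tfrac{q}{c}+\tfrac{\{c\}}{c}=1$ that force the cancellation of all the stray $\tfrac{\{c\}}{c}$ contributions. There is no analytic difficulty here, since convergence of the infinite series and the size of the error terms were already secured in Lemmas \ref{thbrasslem8}--\ref{thbrasslem11}; the one place to be slightly careful is the final merging of the two series, making sure the numerators line up under the common denominator $n(n+1)$ exactly as in the theorem.
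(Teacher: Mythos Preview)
Your proposal is correct and follows essentially the same approach as the paper: both split the sum via linearity, invoke Lemmas \ref{thbrasslem10} and \ref{thbrasslem11}, and then collapse the constants, the $(\log m+\log c+\gamma)$ coefficients, and the two infinite series using the identities $q+\{c\}=c$ and $\tfrac{q}{c}+(cd)^{-1}=1$.
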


\begin{remark}
 Theorem \ref{thbrassthm3} takes a partial sum and transforms it into a
main term, $\log m$, and an infinite sum. The partial sum involves only
quotients of elements that are in a given sequence, namely $C$. The
infinite sum involves fractional parts of elements of two sequences, 
$C^{-1}$ and $D^{-1}$ (where $R^{-1}$ is given by $R^{-1}=( [
r^{-1} n ])_{n=1}^\infty$). Since the infinite sum involves only
fractional parts in the numerator, and $n^2$ in the denominator, it is
convergent.
\end{remark}

\begin{proof} 
     If we put together Lemmas \ref{thbrasslem10} and \ref{thbrasslem11}, the partial sum in Theorem \ref{thbrassthm3} becomes

\begin{align}
         \label{thbrass25}
 &\sum_{n=1}^x \frac{q f(n)+f(n)g([ \frac {n}{c}]+k+1)}{n}=\\ \nonumber
             &\frac{q+\{c\}}{c}+\left( \frac{q}{c}+ (cd)^{-1}\right)(\log m + \log c +\gamma) + \left\lbrace \{c \} (k+1) \right\rbrace-\\
             &\sum_{n=1}^{\infty}\frac{q\left\lbrace c ^{-1}\left(n+1 \right)\right\rbrace+ d^{-1}\left\lbrace c ^{-1}\left(n+1 \right)\right\rbrace+\{d^{-1}\left(c^{'}_{n+1}+k+1\right)\}}{n(n+1)}  + O\left(\frac{1}{m}\right)\text{.}\nonumber 
\end{align}

     Notice that $\displaystyle{\frac{q}{c}+(cd)^{-1}=
\frac{c-\{c\}}{c}+\frac{\{c\}}{c}=1}$, and $\displaystyle{\frac{q+\{c\}}{c}=\frac{c}{c}=1}$. Also, since $q+d^{-1}=c$, we see that
$$q\left\lbrace c ^{-1}\left(n+1 \right)\right\rbrace+ d^{-1}\left\lbrace c ^{-1}\left(n+1 \right)\right\rbrace= c\left\lbrace c ^{-1}\left(n+1 \right)\right\rbrace\text{,}$$
and thus \eqref{thbrass25} becomes
     \begin{align}
         \label{thbrass26}
\sum_{n=1}^x &\frac{q f(n)+f(n)g([ \frac {n}{c}]+k+1)}{n}=\\\nonumber
      &1+ \log m +\log   c+ \gamma +\left\lbrace \{c \} (k+1) \right\rbrace -\\ \nonumber
             & \sum_{n=1}^\infty \frac{c\{ c^{-1} (n+1)\}-\{ d^{-1} (c_{n+1}'+k+1)\}}{n(n+1)}+O\left(\frac{1}{m}\right)\text{.}
     \end{align}

\end{proof}

 \section{Completion of Proof of Theorem \ref{thbrassthm1}}

\begin{proof}[Proof of Theorem \ref{thbrassthm1}]
 It suffices to prove that the partial sum $H=H_1-H_2$ (from Lemma
\ref{thbrasslem5}) is convergent, as $x$ approaches infinity. In order to do this, we apply
the remark following Lemma \ref{thbrasslem11} to $H_1$ and $H_2$. Since each sum is further divided into two partial 
sums we will apply it twice to each of these partial sums. For the first partial sum of $H_1$, the
parameters of the lemma are $c=a$, $d=a^{-1}$ and $ x=[
am]$. Also $f$ and $g$ of Lemma \ref{thbrasslem11} are both the same in the first partial sum of $H_1$, i.e., 
they are both defined in terms $f_{1/a}$. So the first term of $H_1$ gives
  \begin{align}
      \label{thbrass27}
\sum_{n=1}^x \frac{2f(n)f([ \frac{n}{a}]+1)}{n}&= (cd)^{-1} \log m + \alpha_ {c,d}+ O\left(\frac{1}{m}\right)\\\nonumber
                          &=2 (ad)^{-1} \log m + \alpha_ a+ O\left(\frac{1}{m}\right)\\\nonumber
                          &=\frac{2}{a^2}\log m + \alpha_ a+ O\left(\frac{1}{m}\right)\text{.}\nonumber
  \end{align}
For the second summand, $d=b^{-1}$ and thus $g$ is defined in terms of $g_{\frac{1}{b}}$. The other parameters remain the
same. Hence we have
  \begin{align}
      \label{thbrass28}
         \sum_{n=1}^x \frac{f(n)g([ \frac {n}{a}]+1)}{n}&= (cd)^{-1} \log m + \alpha_ {c,d}+ O\left(\frac{1}{m}\right)\\\nonumber
                                    &=(ad)^{-1} \log m + \alpha_{a,b}+O\left(\frac{1}{m}\right)\\\nonumber
                                    &=\frac{1}{ab} \log m + \alpha_ {a,b}+ O\left(\frac{1}{m}\right)\text{.}
  \end{align}
  Similarly, in $H_2$ we have $c=b$ and $d=a^{-1}$ for the first summand. Thus, the role of $f$ and $g$ in Lemma \ref{thbrasslem11} 
are switched in this first summand. Also, for the second summand of $H_2$, we take $c=b$ and $d=b^{-1}$. As a result,
in this second summand both $f$ and $g$ are the same, i.e., they are both defined in terms of $g_{\frac{1}{b}}$.
 In the end, the two summands of $H_2$ give 
  \begin{align}
      \label{thbrass29}
        H_2 &= \sum_{n=1}^{x} \left(\frac{3g(n)f([\frac{n}{b} ]+1)}{n} + \frac{2g(n)g([\frac{n}{b}]+2)}{n}\right)\\
             &=\left(3 (bd)^{-1}+ 2 (bd)^{-1})\right) \log m + \alpha_a+ O\left(\frac{1}{m}\right)\nonumber \\
             & = \left(\frac{3}{ab} + \frac{2}{b^2}\right) \log m + \alpha_{a,b}+ O\left(\frac{1}{m}\right)\text{.}\nonumber
  \end{align}
  When we put together \eqref{thbrass27}, \eqref{thbrass28} and
\eqref{thbrass29}, $H$ becomes
 \begin{equation}
      \label{thbrass30}
        H=H_1-H_2=\left(\frac{2}{a^2}+\frac{1}{ab}-\frac{3}{ab} -
\frac{2}{b^2}\right) \log m + \alpha_ {a,b}+ O\left(\frac{1}{m}\right)\text{.}
 \end{equation}
   Using $\frac{1}{a}+\frac{1}{b}=1$ and $a^2=b$, we see that
$$\frac{3}{ab}+\frac{2}{b^2}-\frac{2}{a^2}
-\frac{1}{ab}=\frac{2}{b}\left( \frac{1}{a}+\frac{1}{b}
\right)+\frac{1}{ab}-\frac{2}{a^2} -\frac{1}{ab}=
\frac{2}{b}-\frac{2}{a^2}=0\text{.}$$
Thus if we let $m$ tend to $\infty$ in \eqref{thbrass30}, we find that
$H=\alpha_{a,b}$.
\end{proof}

\section{Generalization} 
We now want to undertake the task of proving a more general version of
Theorem \ref{thbrassthm1}, one involving any irrational numbers $a,b>1$, and $a_{n+k}$ for any integer $k$.
This version will give an identity involving the type of series that occurs in Theorem
\ref{thbrassthm1}. Here Theorem \ref{thbrassthm2} is presented, and then Theorem \ref{thbrassthm1a} (in which the identity is found) is derived as a corollary.

\begin{thm} 
\label{thbrassthm2} 
Let $a>1$ and $b>1$ be irrational numbers. Let $a_n=[ an]$ 
and $b_n=[ bn]$. Also, let $a^{'}_n = [ a
^{-1}n]$  and $b^{'}_n = [ b ^{-1}n]$ Then, for any integer $k$ we have
 \begin{align*}
\sum_{n=1}^\infty \left( \frac{a_{n+k}}{a_n}-\frac{b_{n+k}}{b_n} \right)&=
 k\left( \log a -\log b\right) + \sum_{j=1}^{k}\left(\left\lbrace j\{ a \}\right\rbrace - \left\lbrace j\{ b \}\right\rbrace\right)-\\\nonumber                  
&\sum_{n=1}^{\infty}\left( \frac{ak\{ a^{-1} (n+1) \}-bk\{ b^{-1} (n+1) \}}{n(n+1)}\right)-\\\nonumber
&\sum_{n=1}^{\infty}\left( \frac{\sum_{j=1}^k\{ \{ a\} (a_{n+1}'+j) \}-\sum_{j=1}^k\{ \{ b\} (b_{n+1}'+j) \}}{n(n+1)}\right)\text{.}                  
\end{align*}
\end{thm}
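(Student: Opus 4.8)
The plan is to reduce Theorem~\ref{thbrassthm2} to the machinery already assembled, essentially by running the argument of Theorem~\ref{thbrassthm1} in the general setting with shift $k$ instead of $1$. First I would expand each term $\frac{a_{n+k}}{a_n}$ exactly as in the introduction: writing $a_{n+k}=a_n+ k + \sum_{j=1}^{k} f_{\{a\}}(n+j-1)$ (iterating Lemma~\ref{thbrasslem1} with the observation that $a_{r+1}-a_r = 1 + f_{\{a\}}(r)$), so that
\begin{equation*}
\frac{a_{n+k}}{a_n} = 1 + \frac{k}{a_n} + \frac{1}{a_n}\sum_{j=1}^{k} f_{\{a\}}(n+j-1)\text{,}
\end{equation*}
and similarly for $b$. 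Summing the difference over $n\le m$, the constant $1$'s cancel, and $H$ (the $m^{\text{th}}$ partial sum) splits into a ``$k/a_n - k/b_n$'' piece plus an ``$f$-weighted'' piece for $a$ minus the analogous piece for $b$. The first piece is handled by Lemma~\ref{thbrasslem6a} and partial summation exactly as in Lemma~\ref{thbrasslem10} (giving $k$ times a $\log m$ term plus a constant plus the $\sum ak\{a^{-1}(n+1)\}/n(n+1)$ tail). For the $f$-weighted pieces, I would push each sum over all integers up to $x=[am]$ (resp.\ $[bm]$) using Lemma~\ref{thbrasslem4} and Definition~\ref{thbrassdef3}, turning $\sum_{n\le m, n\in A}\frac{1}{a_n}\sum_j f(\dots)$ into $\sum_{n\le x}\frac{f(n)}{n}\sum_{j=1}^{k} g([\tfrac{n}{a}]+j)$ for the appropriate indicator $g$; this is exactly the object Lemma~\ref{thbrasslem7} counts (summed over $j$), and Lemma~\ref{thbrasslem11} / Lemma~\ref{thbrasslem9} evaluate it.

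The bookkeeping step is then to collect the $\log m$ coefficients and verify they cancel so the partial sum converges, and to read off the limit. In the general case there is no reason the sequences are complementary, so I do \emph{not} expect the $\log m$ coefficients to vanish term by term the way they did in Theorem~\ref{thbrassthm1}; rather, after assembling everything, the coefficient of $\log m$ in $\sum_{n\le m}(\frac{a_{n+k}}{a_n}-\frac{b_{n+k}}{b_n})$ will be $k(1/a + \text{something}) - k(1/b + \text{something})$, and the point is that the ``something'' pieces coming from the $k/a_n$ term and from the $f$-weighted terms combine (using $\frac1a + \frac1a\cdot\frac{1}{?}\cdots$, i.e.\ the identity $q/c + (cd)^{-1} = 1$ used in Theorem~\ref{thbrassthm3}) to produce exactly $k\log a$ for the $a$-part and $k\log b$ for the $b$-part, with matching $k\log(\text{slope})+k\gamma$ constants that cancel between $a$ and $b$. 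Concretely I would invoke Theorem~\ref{thbrassthm3} (with $c=a$, so $d=\{a\}^{-1}$, and the shift $k$) once per letter: it already packages ``$qf(n) + f(n)g([\tfrac{n}{a}]+k+1)$'' into $1 + \log m + \log a + \gamma + \{\{a\}(k+1)\} - \sum \frac{a\{a^{-1}(n+1)\}+\{\{a\}(a'_{n+1}+k+1)\}}{n(n+1)} + O(1/m)$, and summing the analogue of that identity over the shifts $j=1,\dots,k$ (or re-deriving the $k$-fold version directly from Lemmas~\ref{thbrasslem10}, \ref{thbrasslem11}, \ref{thbrasslem7}) yields the $k\log a$ main term, the $\sum_{j=1}^{k}\{j\{a\}\}$ sum, the $\sum \frac{ak\{a^{-1}(n+1)\}}{n(n+1)}$ tail, and the $\sum\frac{\sum_{j=1}^k\{\{a\}(a'_{n+1}+j)\}}{n(n+1)}$ tail — precisely the three $a$-contributions in the statement.

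After subtracting the $b$-version, the $k\log(\text{slope})$ terms become $k(\log a - \log b)$ (note $\log a = \log(\text{slope }a)$ since the slope \emph{is} $a$ here — unlike in Theorem~\ref{thbrassthm1} where the slopes were $a$ and $b=a^2$, here the sequences $A,B$ have slopes $a,b$ directly), the $\gamma$ and the spurious additive constants cancel, the fractional-part constants assemble into $\sum_{j=1}^{k}(\{j\{a\}\} - \{j\{b\}\})$, and the two families of tails give the two infinite sums in the statement; the $O(1/m)$ errors vanish as $m\to\infty$, which simultaneously proves convergence of the left-hand series.

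\medskip

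\noindent\textbf{Main obstacle.} The delicate point is \emph{not} the analytic estimates — those are all quoted — but the exact index bookkeeping when rewriting the shifted $f$-weighted sums over all integers: one must correctly match, for each $j\in\{1,\dots,k\}$, which indicator ($f_{1/a}$ or $g_{1/b}$, resp.\ $f_{1/b}$ or $g_{1/a}$) governs membership, which argument shift $[\tfrac n a]+j$ (or $+j+1$) appears, and that the partition of the integers by $A$ and $B$ is used consistently as in Lemma~\ref{thbrasslem5}. I would isolate this as a preliminary lemma: ``for each fixed $j$, $\sum_{n\le m}\frac{f_{\{a\}}(n+j-1)}{a_n}\cdot[\text{appropriate membership}] = \sum_{n\le x}\frac{f(n)g([\tfrac n a]+j)}{n}$ up to $O(1/m)$,'' apply Lemma~\ref{thbrasslem7}, and only then sum over $j$. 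Once that translation is pinned down, the rest is the algebraic cancellation already rehearsed in Theorem~\ref{thbrassthm3}, now carried out with the extra parameter $k$ and with no complementarity assumption forcing the $\log m$ coefficient to zero — instead the surviving $\log m$ coefficient is $k(1/a+\cdots) - k(1/b+\cdots)$ whose ``$\cdots$'' parts telescope into $k\log a - k\log b$ via the same $q/c+(cd)^{-1}=1$ identity, completing the proof.
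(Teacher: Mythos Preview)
Your overall strategy is the paper's own: expand $a_{n+k}-a_n$ via iterated first differences, rewrite the partial sum over all integers using indicator functions, apply Theorem~\ref{thbrassthm3} summed over the shifts $j=1,\dots,k$ (this is exactly the paper's Theorem~\ref{thbrassthm4}), and subtract the $b$-version from the $a$-version so that the $k(\log m+\gamma+1)$ terms cancel and $k(\log a-\log b)$ together with the three displayed tails survive. So the architecture is right.

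Two concrete slips would derail the execution as written. First, your expansion $a_{n+k}=a_n+k+\sum_{j}f_{\{a\}}(n+j-1)$ is the special case $[a]=1$; for general $a>1$ the correct formula (Lemma~\ref{thbrasslem12}/\ref{thbrasslem14}) is $a_{n+k}=a_n+kq+\sum_j f_{\{a\}}(n+j-1)$ with $q=[a]$, and the ``first piece'' must be $kq/a_n-kr/b_n$, not $k/a_n-k/b_n$. This is not cosmetic: without the $q$ and $r$ the $\log m$ coefficients come out as $k(1+\{a\})/a$ and $k(1+\{b\})/b$, which do \emph{not} cancel. It is precisely the identity $q/a+\{a\}/a=1$ (your $q/c+(cd)^{-1}=1$) that forces both coefficients to equal $k$, whence they cancel and the $k\log a$, $k\log b$ sit among the \emph{constant} terms of Theorem~\ref{thbrassthm3}, not among the $\log m$ coefficients as your last paragraph suggests. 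Second, your appeal to ``the partition of the integers by $A$ and $B$ \dots\ as in Lemma~\ref{thbrasslem5}'' has no analogue here: for arbitrary $a,b>1$ the sequences $A,B$ need not be complementary, and the paper handles this by introducing \emph{four} indicators ($h=h_{1/a}$, $w=w_{1/b}$ for membership in $A,B$, and $f=f_{\{a\}}$, $g=g_{\{b\}}$ for the increments), rather than two; see Definition~\ref{thbrassdef4} and Lemma~\ref{thbrasslem14a}. Once you replace $k$ by $kq$ (resp.\ $kr$) and drop the partition step in favor of the $h,w$ indicators, your outline coincides with the paper's proof.
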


By Weyl's theorem \cite[Page 1]{montgomery} for any irrational $\alpha$, and any fixed integer $q$, the sequences $\{\alpha n \}$ and 
$\{\alpha (q+n) \}$ are uniformly distributed, and thus 

$$\displaystyle \lim_{k\to \infty}\frac1k\sum_{j=1}^k \{ j \alpha \}=\lim_{k\to \infty}\frac1k\sum_{j=1}^k\{ \{ a\} (a_{n+1}'+j) \}=\frac12\text{.}$$
Upon dividing by $k$ and taking the limit as $k$ goes to infinity, we deduce the following corollary ( which is Theorem \ref{thbrassthm1a}).

\begin{corollary}
\label{thbrasscor3}
Let $a>1$ and $b>1$ be irrational numbers. Let $a_n=[ an]$ 
and $b_n=[ bn]$. Then we have:
 \begin{align}
\lim_{k\to \infty}\frac1k\sum_{n=1}^\infty & \left( \frac{a_{n+k}}{a_n}-\frac{b_{n+k}}{b_n}\right)+ \sum_{n=1}^{\infty}\frac{a\{ a^{-1} (n+1) \}-b\{ b^{-1} (n+1) \}}{n(n+1)}=\log \frac{a}{b}\text{.}
\end{align}

\end{corollary}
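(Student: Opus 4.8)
The plan is to obtain Corollary~\ref{thbrasscor3} as an immediate consequence of Theorem~\ref{thbrassthm2}: divide the identity there by the positive integer $k$ and let $k\to\infty$, handling the four terms on the right-hand side one at a time. Concretely, dividing the identity of Theorem~\ref{thbrassthm2} by $k$ (so that the two prefactors $k$, in $k(\log a-\log b)$ and in $\sum_n\frac{ak\{a^{-1}(n+1)\}-bk\{b^{-1}(n+1)\}}{n(n+1)}$, are cancelled) yields, for every positive integer $k$,
\begin{align*}
\frac1k\sum_{n=1}^{\infty}\left(\frac{a_{n+k}}{a_n}-\frac{b_{n+k}}{b_n}\right)
&= \log\frac{a}{b} + \frac1k\sum_{j=1}^{k}\left(\{j\{a\}\}-\{j\{b\}\}\right)\\
&\quad - \sum_{n=1}^{\infty}\frac{a\{a^{-1}(n+1)\}-b\{b^{-1}(n+1)\}}{n(n+1)}\\
&\quad - \frac1k\sum_{n=1}^{\infty}\frac{\sum_{j=1}^{k}\left(\{\{a\}(a_{n+1}'+j)\}-\{\{b\}(b_{n+1}'+j)\}\right)}{n(n+1)}.
\end{align*}

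I would then dispose of the three "easy" pieces. The first term, $\log(a/b)$, is independent of $k$. The third term is also independent of $k$, and it is exactly the sum that appears on the left-hand side of the assertion. For the second term, $\{a\}$ and $\{b\}$ are irrational, so by Weyl's equidistribution theorem $\frac1k\sum_{j=1}^{k}\{j\{a\}\}\to\frac12$ and $\frac1k\sum_{j=1}^{k}\{j\{b\}\}\to\frac12$, whence $\frac1k\sum_{j=1}^{k}\left(\{j\{a\}\}-\{j\{b\}\}\right)\to 0$ as $k\to\infty$.

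The substantive step is the last term. For each fixed $n$, the sequence $j\mapsto\{a\}(a_{n+1}'+j)$ is the sequence $j\mapsto j\{a\}$ translated by the fixed real number $\{a\}\,a_{n+1}'$, hence still equidistributed modulo $1$ (the translation does not affect equidistribution because $\{a\}$ is irrational); therefore $\frac1k\sum_{j=1}^{k}\{\{a\}(a_{n+1}'+j)\}\to\frac12$, and similarly for $b$, so the $n$-th summand of the last sum tends to $0$ as $k\to\infty$. To pass this pointwise convergence through the infinite summation over $n$ I would invoke dominated convergence: each of the two inner sums over $j$ has $k$ terms, all lying in $[0,1)$, so for every $k$ and every $n$ the $n$-th summand of the last sum is bounded in absolute value by $\frac{2k}{k\,n(n+1)}=\frac{2}{n(n+1)}$, and $\sum_{n\ge1}\frac{2}{n(n+1)}<\infty$. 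Hence the last term tends to $0$. Combining the four pieces, the right-hand side of the displayed identity converges as $k\to\infty$, so the left-hand side converges as well, to $\log(a/b)-\sum_{n=1}^{\infty}\frac{a\{a^{-1}(n+1)\}-b\{b^{-1}(n+1)\}}{n(n+1)}$; transposing the $n$-sum gives exactly the claimed identity.

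The main obstacle is precisely this dominated-convergence bookkeeping for the last term: one must confirm that the pointwise limit is genuinely $0$ for every $n$ (via Weyl applied to the translated sequences) and that the uniform-in-$k$ dominating function $\frac{2}{n(n+1)}$ is summable. Everything else is routine; in particular, the convergence of all the series occurring in the identity, which legitimizes the final transposition, is already built into the statement of Theorem~\ref{thbrassthm2}, and Theorem~\ref{thbrassthm2} itself is established by extending the partial-summation machinery developed for the particular case in Sections~2--6.
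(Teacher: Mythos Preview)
Your proposal is correct and follows essentially the same route as the paper: divide the identity of Theorem~\ref{thbrassthm2} by $k$, apply Weyl's equidistribution theorem to the two Ces\`aro-type averages, and let $k\to\infty$. Your explicit dominated-convergence argument for passing the limit through the infinite $n$-sum is a point the paper leaves implicit.
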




We proceed by rewriting the series in Theorem \ref{thbrassthm2} as a sum over all the
integers. When doing this, we will need to change the strategy used in
Theorem \ref{thbrassthm1}, since in general, we do not have the property
that $A$ and $B$ partition the integers (here $A=(a_n)_{n=1}^\i$ and
$B=(b_n)_{n=1}^\i$). When applying partial summation, we will use
Theorem \ref{thbrassthm3}, instead of the remarks following Lemmas \ref{thbrasslem10} and \ref{thbrasslem11}.

We begin with the left hand side of Theorem \ref{thbrassthm2}, namely with
\begin{equation}
 \label{thbrass31}
     \sum_{n=1}^\infty \left(\frac{a_{n+k}}{a_n}-\frac{b_{n+k}}{b_n}\right)\text{.}
\end{equation}
 Again we will give this proof in several lemmas. Since
$a=b$ gives zero trivially, throughout the proof we assume that $a<b$.

\begin{lem}
\label{thbrasslem12}
Let $a>1$ be any irrational number, and let $q$ be the unique positive integer such that $1\leq q<a<q+1$. Set $c^{-1}=\{a\}$ and
$f(n)=f_{\frac{1}{c}}(n)$. Then
\begin{equation}
\label{thbrass32}
a_{n+1}= a_n + q+ f(n)\text{.}
\end{equation}
\end{lem}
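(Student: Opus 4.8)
The plan is to mimic exactly the computation used in Lemma~\ref{thbrasslem1}, but tracking the integer part $q=[a]$ instead of specializing to $q=1$. Writing $a = q + \{a\}$, we have
\begin{align*}
 a_{n+1} - a_n = [\,a(n+1)\,] - [\,an\,] &= [\,(q+\{a\})(n+1)\,] - [\,(q+\{a\})n\,]\\
 &= q(n+1) - qn + [\,\{a\}(n+1)\,] - [\,\{a\}n\,]\\
 &= q + f_{\{a\}}(n)\text{,}
\end{align*}
where the middle step uses that $q(n+1)$ and $qn$ are integers, so they pull out of the bracket function cleanly. The first line is just the definition of $a_n$, and the last line is the definition of $f_{\{a\}}$ from Definition~\ref{thbrassdef1} (note $0<\{a\}<1$, so this is legitimate).

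Next I would identify $f_{\{a\}}$ with the function $f$ in the statement. By hypothesis $c^{-1} = \{a\}$, i.e. $\{a\} = 1/c$, so $f_{\{a\}}(n) = f_{1/c}(n) = f(n)$, which is precisely the function named in the lemma. Substituting this into the displayed identity above and rearranging gives $a_{n+1} = a_n + q + f(n)$, which is \eqref{thbrass32}. This completes the proof.

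I do not anticipate any real obstacle here: the only thing to be careful about is the algebraic manipulation of the bracket function, specifically that one may split off the integer contribution $q(n+1)-qn = q$ from $[\,(q+\{a\})(n+1)\,]-[\,(q+\{a\})n\,]$. That is valid because $[\,m + x\,] = m + [\,x\,]$ for any integer $m$ and real $x$, applied with $m = q(n+1)$ (resp.\ $m=qn$) and $x = \{a\}(n+1)$ (resp.\ $x=\{a\}n$). Everything else is bookkeeping, and the structure is identical to the $q=1$ case already carried out in Lemma~\ref{thbrasslem1}.
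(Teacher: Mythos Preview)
Your proof is correct and follows essentially the same computation as the paper: decompose $a=q+\{a\}$, pull the integer parts $q(n+1)$ and $qn$ out of the floor function, and recognize the remaining difference as $f_{\{a\}}(n)=f_{1/c}(n)=f(n)$. Your write-up is, if anything, slightly more careful in justifying the bracket manipulation and the identification $\{a\}=1/c$.
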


\begin{proof}
     Note that 
\begin{align}
a_{n+1}-a_n & =[ a(n+1)]-[
an]=[ (q+\{a\})(n+1)]-[ (q+\{a\})n] \nonumber \\
            & =[ qn+q+\{a\}(n+1)]-[ qn+\{a\}n] \nonumber \\
            &=qn+q-qn + [ \{a\}(n+1)]-[\{a\}n]= q+f_{\{a\}}(n)\text{,}\nonumber
\end{align}
 \noindent hence $a_{n+1}=a_n+q+f(n)\text{.}$
\end{proof}

Similarly, we can prove the following result:
\begin{lem}
\label{thbrasslem13}
Let $b>1$ be any irrational number, and let $r$ be the unique positive integer such that $1\leq r<b<r+1$. Set $d^{-1}=\{b\}$ and
$g(n)=g_{\frac{1}{d}}(n)$. Then
\begin{equation}
\label{thbrass32a}
b_{n+1}= b_n + r+ g(n)\text{,}
\end{equation}
where $g_{\frac{1}{d}}(n)$ is the characteristic function of $\frac{1}{d}$.
\end{lem}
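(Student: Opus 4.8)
The plan is to repeat, almost verbatim, the computation used in the proof of Lemma~\ref{thbrasslem12}, with $a$ replaced by $b$ and $q$ replaced by $r$. Since $b>1$ is irrational, I would write $b = r + \{b\}$, where $r=[b]\geq 1$ is the integer in the statement and $\{b\}$ is irrational with $0<\{b\}<1$, and then compute the consecutive difference $b_{n+1}-b_n = [b(n+1)]-[bn]$ by direct substitution.

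First I would expand $[b(n+1)]-[bn] = [(r+\{b\})(n+1)]-[(r+\{b\})n] = [rn+r+\{b\}(n+1)]-[rn+\{b\}n]$. Because $rn+r$ and $rn$ are integers, they may be pulled out of the bracket function, so that $b_{n+1}-b_n = r + [\{b\}(n+1)]-[\{b\}n] = r + f_{\{b\}}(n)$, where $f_{\{b\}}$ is the characteristic function of $\{b\}$ in the sense of Definition~\ref{thbrassdef1}. It then remains only to identify $f_{\{b\}}(n)$ with the function $g(n)=g_{1/d}(n)$ of the statement: since $d^{-1}=\{b\}$ by hypothesis, $g$ is precisely the characteristic function of $1/d=\{b\}$, i.e.\ $g(n)=f_{\{b\}}(n)$, and substituting gives $b_{n+1}=b_n+r+g(n)$, as claimed.

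Honestly there is no real obstacle here; the only step worth double-checking is the bookkeeping in extracting the integers $rn+r$ and $rn$ from the two bracket functions, which is exactly the manipulation already carried out for Lemma~\ref{thbrasslem12}. One should also record that $\{b\}$ is irrational (inherited from $b$), so that $g=g_{1/d}$ is genuinely the characteristic function of an irrational number, consistent with the standing conventions of the paper.
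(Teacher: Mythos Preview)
Your proposal is correct and is precisely the approach the paper intends: the paper does not give a separate proof of this lemma but simply says ``Similarly, we can prove the following result,'' referring back to the computation of Lemma~\ref{thbrasslem12}. Your write-up carries out that analogous computation and correctly identifies $g_{1/d}$ with $f_{\{b\}}$ via $d^{-1}=\{b\}$.
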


\begin{remark}
 Lemmas \ref{thbrasslem12} and \ref{thbrasslem13} are written in a slightly
different ways than the corresponding lemmas in the proof of Theorem
\ref{thbrassthm1}. This will allow us to write the sum over all 
positive integers without requiring that the sequences $A$ and $B$
partition the integers. In Theorem \ref{thbrassthm1} we only needed two
characteristic functions, since in that case
$a=\{a\}^{-1}=\{b\}^{-1}$. In this theorem we will need four
characteristic functions. The changes in the rest of the proof of Theorem \ref{thbrass2} are consequences of these variations.
\end{remark}

\begin{lem}
\label{thbrasslem14}
Let $1\leq q<a<q+1$ be any irrational number. Set $c^{-1}=\{a\}$ and
$f(n)=f_{\frac{1}{c}}(n)$. Then for any integer $k$ we have
\begin{equation}
a_{n+k}= a_n + kq+ \sum_{j\leq k}f(n+j)\text{.}
\end{equation}
\end{lem}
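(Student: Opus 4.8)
The plan is to prove this by induction on $k\geq 1$, using Lemma \ref{thbrasslem12} as the engine for the inductive step. The base case $k=1$ is exactly the statement of Lemma \ref{thbrasslem12}, namely $a_{n+1}=a_n+q+f(n)=a_n+q\cdot 1+\sum_{j\leq 1}f(n+j)$. For the inductive step, suppose the formula holds for some $k\geq 1$, that is, $a_{n+k}=a_n+kq+\sum_{j\leq k}f(n+j)$ for all $n$. I would then apply Lemma \ref{thbrasslem12} with $n$ replaced by $n+k$ to write $a_{n+k+1}=a_{n+k}+q+f(n+k)$, and substitute the inductive hypothesis for $a_{n+k}$ to obtain
$$a_{n+k+1}=a_n+kq+\sum_{j\leq k}f(n+j)+q+f(n+k)=a_n+(k+1)q+\sum_{j\leq k+1}f(n+j)\text{,}$$
which is the desired identity with $k$ replaced by $k+1$.

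One small point worth checking is that Lemma \ref{thbrasslem12} is genuinely available for every positive integer argument, not just for $n$ ranging over some restricted set; inspecting its proof, the computation of $a_{n+1}-a_n$ via $[qn+q+\{a\}(n+1)]-[qn+\{a\}n]=q+f_{\{a\}}(n)$ is valid for all positive integers, so applying it at $n+k$ is legitimate. The other thing to confirm is the bookkeeping of the index set: $\sum_{j\leq k}f(n+j)$ with the convention $j\geq 1$ runs over $j=1,\dots,k$, so appending the single term $f(n+k)$—which is the term with $j=k$... wait, that would double-count; rather it is the term $j=k+1$ evaluated as $f(n+(k+1))$, and indeed $f(n+k)$ as produced by Lemma \ref{thbrasslem12} applied at the point $n+k$ equals $f((n+k))$, i.e. the value of $f$ at $n+k$, which is precisely the $j=k+1$ summand $f(n+(k+1))$ only if one is careful: Lemma \ref{thbrasslem12} at point $N$ gives $a_{N+1}=a_N+q+f(N)$, so at $N=n+k$ it contributes $f(n+k)$, and $n+k = n+(k+1)-1$, so this is the $j=k$ term, not $j=k+1$. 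Hence I would instead note $\sum_{j\leq k}f(n+j)+f(n+k)$ is not yet $\sum_{j\leq k+1}f(n+j)$; the correct reading is that Lemma \ref{thbrasslem12} should be applied to relate $a_{(n+1)+k}$ to $a_{n+1+k}$ differently, or better, one reindexes from the front: $a_{n+k+1}=a_{(n+1)+k}=a_{n+1}+kq+\sum_{j\leq k}f(n+1+j)$ by the inductive hypothesis applied at $n+1$, then $a_{n+1}=a_n+q+f(n)$ by Lemma \ref{thbrasslem12}, giving $a_{n+k+1}=a_n+(k+1)q+f(n)+\sum_{j\leq k}f(n+1+j)=a_n+(k+1)q+\sum_{j\leq k+1}f(n+j)$, where the last step just shifts the summation index.

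The main obstacle here is essentially nonexistent—this is a routine telescoping/induction argument—so the only real care needed is the index arithmetic in the inductive step, which is why I would use the front-reindexing version ($a_{n+k+1}=a_{(n+1)+k}$) rather than the back version, since it makes the collapse $f(n)+\sum_{1\le j\le k}f(n+1+j)=\sum_{1\le j\le k+1}f(n+j)$ completely transparent. Alternatively, one could bypass induction entirely by summing the telescoping identity from Lemma \ref{thbrasslem12}: $a_{n+k}-a_n=\sum_{i=0}^{k-1}(a_{n+i+1}-a_{n+i})=\sum_{i=0}^{k-1}\bigl(q+f(n+i)\bigr)=kq+\sum_{j=1}^{k}f(n+j-1)$, and then a single shift of index gives the stated form; I would likely present this telescoping version as it is the shortest and makes the role of Lemma \ref{thbrasslem12} most visible.
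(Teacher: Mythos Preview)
Your telescoping argument at the end is precisely the paper's proof: the paper writes $a_{n+k}-a_n=\sum_{j\leq k}(a_{n+j}-a_{n+j-1})=\sum_{j=0}^{k-1}\bigl(q+f(n+j)\bigr)=kq+\sum_{j=0}^{k-1}f(n+j)$, using Lemma~\ref{thbrasslem12}. Note, however, that both you and the paper arrive at $\sum_{j=0}^{k-1}f(n+j)$, whereas the lemma as displayed has $\sum_{j\leq k}f(n+j)$; this is an indexing slip in the statement (the paper uses the $j=0,\dots,k-1$ form immediately afterwards, e.g.\ in \eqref{thbrass33}), and your closing remark that ``a single shift of index gives the stated form'' is not right---no reindexing turns $f(n)+\cdots+f(n+k-1)$ into $f(n+1)+\cdots+f(n+k)$.

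That same slip is what derails your inductive attempts. The base case already fails as written: Lemma~\ref{thbrasslem12} gives $a_{n+1}=a_n+q+f(n)$, but $\sum_{j\leq 1}f(n+j)=f(n+1)$, not $f(n)$. Your ``front-reindexing'' step then asserts $f(n)+\sum_{j=1}^{k}f(n+1+j)=\sum_{j=1}^{k+1}f(n+j)$, which is false (the left side contains $f(n)$ and omits $f(n+1)$; the right side does the opposite). Once you correct the target to $\sum_{j=0}^{k-1}f(n+j)$, both your induction and your telescoping go through without incident and coincide with the paper's one-line argument.
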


\begin{proof}
From \eqref{thbrass31} we have
\begin{equation*}
a_{n+k}-a_n=\sum_{j\leq k} a_{n+j}-a_{n+j-1}= \sum_{j=0}^{k-1} q+f(n+j)=kq+\sum_{j=0}^{k-1}f(n+j)\text{.}\qedhere 
\end{equation*}

\end{proof}

We can now rewrite the summands in the first infinite series of Theorem
\ref{thbrassthm2} as
\begin{equation}
   \label{thbrass33}
       \displaystyle{\frac{a_{n+k}}{a_n}-\frac{b_{n+k}}{b_n}=
\frac{qk+\sum_{j=0}^{k-1}f(n+j)}{a_n}}-\frac{rk+\sum_{j=0}^{k-1}g(n+j)}{b_n}\text{.}
\end{equation}

Set $y=y(a,m)=[ am ]$ and $x=x(b,m)=[ bm ]$. As
we did in Theorem \ref{thbrassthm1}, we define the following two additional
arithmetic functions in  terms of the characteristic functions $h_\frac{1}{a}$ and $w_\frac{1}{b}$, 
respectively.

\begin{defin}
\label{thbrassdef4}
   Define\\
\noindent\begin{minipage}{.45\linewidth}
 \centering
 \begin{equation}
     \label{thbrass34}
             h(n)=
               \begin{cases}
                 h_{\frac{1}{c}}(n)\text{,}  & \hspace{5mm}\text{if }     n\leq y\text{,}\\
                 0\text{,}&      \hspace{10mm} \text{otherwise}\text{.}
               \end{cases}
 \end{equation}

\end{minipage}\hfill
\begin{minipage}{.45\linewidth}
 \centering
  \begin{equation}
     \label{thbrass35}
     w(n)=
         \begin{cases}
            w_{\frac{1}{d}}(n)\text{,}  & \hspace{5mm}\text{if }     n\leq x\text{,}\\
            0 \text{,}&      \hspace{10mm} \text{otherwise.}
         \end{cases}
 \end{equation}

\end{minipage}

\end{defin}
\hspace{.5cm}

Now we use \eqref{thbrass33} and Definition \ref{thbrassdef4} to
rewrite the first infinite sum in Theorem \ref{thbrassthm2}, in the
following way.

\begin{align}
 \label{thbrass36}
\sum_{n\leq m}\left(\frac{a_{n+k}}{a_n}-\frac{b_{n+k}}{b_n}\right)   & =\sum_{n \leq x} \frac{qk+\sum_{j=0}^{k-1}f(n+j)}{a_n}-\sum_{n \leq y}\frac{rk+\sum_{j=0}^{k-1}g(n+j)}{b_n}\\
                                                        & = \sum_{n \leq x} \frac{qkh(a_n)+\sum_{j=0}^{k-1}h(a_n)f(n+j)}{a_n}\\\nonumber
       &- \sum_{n \leq y}\frac{rk w(b_n)+\sum_{j=0}^{k-1}w(b_n)g(n+j)}{b_n}\text{.} \notag
\end{align} 

Using \eqref{thbrass4} we inmediately obtain the following lemma.
\begin{lem}
\label{thbrasslem14a}
For any two Beatty sequences $a_n$ with slope $a$ and $b_n$ with slope $b$,  let $f$ and $g$, be the characteristic sequences of
 $\{a\}$ and $\{b\}$, respectively. Also let $h$ and $w$  be given as in Definition \ref{thbrassdef3} in terms of $h_{1/a}$ and $w_{1/b}$, 
respectively. Then
\begin{align}
 \label{thbrass37a}
\sum_{n\leq m}\left(\frac{a_{n+k}}{a_n}-\frac{b_{n+k}}{b_n}\right)   &=  \sum_{n \leq x} \frac{qkh(n)+\sum_{j=0}^{k-1}h(n)f([ \frac{n}{a} ]
+j+1)}{n}\\\nonumber & - \sum_{n \leq y}\frac{rkw(n)+\sum_{j=0}^{k-1}w(n)g([ \frac{n}{a} ]+j+1)}{n}\text{,}
\end{align}
where $q=[ a ]$ and $r=[ b ]$.
\end{lem}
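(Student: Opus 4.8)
The plan is to verify Lemma \ref{thbrasslem14a} by a change of index of summation in each of the two sums appearing in the second line of \eqref{thbrass36}, converting a sum indexed by $n\le x$ over elements $a_n$ of the Beatty sequence $A$ (respectively $n\le y$ over $b_n\in B$) into a sum indexed by the running integer variable $n\le x$ (respectively $n\le y$). First I would focus on the $A$-sum $\sum_{n\le x}\bigl(qk\,h(a_n)+\sum_{j=0}^{k-1}h(a_n)f(n+j)\bigr)/a_n$ and substitute the new variable $N=a_n=[an]$. By Lemma \ref{thbrasslem4}, as $n$ ranges over $1,\dots,x$ the value $N$ ranges over exactly the elements of $A$ in $[a_1,a_x]$, and for each such $N$ the original index is recovered as $n=[N/a]+1$; moreover $h_{1/a}(N)=1$ precisely on those $N$ that lie in $A$ (this is \eqref{thbrass4} applied with $\beta=a$), so inserting the indicator $h(N)$ lets us extend the summation to all integers $N\le y$, and the cutoff in the definition \eqref{thbrass34} of $h$ (which is $0$ for $N>y$) lets us harmlessly push the upper limit to $x$. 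Under this substitution $h(a_n)\mapsto h(N)$, the denominator $a_n\mapsto N$, and the argument $f(n+j)=f([N/a]+1+j)=f([N/a]+j+1)$, which is exactly the summand displayed in \eqref{thbrass37a}.

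Next I would carry out the mirror-image computation for the $B$-sum, substituting $N=b_n=[bn]$, so that by Lemma \ref{thbrasslem4} the index is recovered as $n=[N/b]+1$ and the indicator is $w_{1/b}(N)$ by \eqref{thbrass4}; here the natural upper limit is $x=[bm]$, but since $w(N)=0$ for $N>x$ by \eqref{thbrass35} we may again write the sum over all $N\le x$. One point that requires a word of care — and this is the main obstacle — is the asymmetry noted parenthetically in the statement of Lemma \ref{thbrasslem5} and flagged again before Lemma \ref{thbrasslem14a}: the inner argument is written as $g([n/a]+j+1)$ rather than $g([n/b]+j+1)$. The reconciliation is exactly the one already used in the proof of Lemma \ref{thbrasslem5}: whenever $w(n)=1$ we have $n\in B$, say $n=b_r$, and then $[n/a]+1$ and $[n/b]+1$ relate through the complementarity-type identities available for these particular sequences, so that $g$ evaluated at one equals $g$ evaluated at the appropriate shift of the other; I would invoke Lemma \ref{thbrasslem4} and the relation $1/a+1/b=1$ (valid in all the cases of interest here, and in general absorbed into the bookkeeping of the shift index $j$) to justify replacing $[n/b]$ by $[n/a]$ at the cost of a constant shift, which is then folded into the range of $j$.

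With both substitutions in place, subtracting the transformed $B$-sum from the transformed $A$-sum yields precisely the right-hand side of \eqref{thbrass37a}, with $q=[a]$ and $r=[b]$ as already fixed in Lemmas \ref{thbrasslem12} and \ref{thbrasslem13}. The only genuinely delicate step is the index-shift identity just discussed; everything else is a routine re-indexing that parallels, term for term, the argument already given for Lemma \ref{thbrasslem5}, and I would present it compactly by saying that one treats each of the $1+k$ summands (the $qk$-term and the $k$ terms $f(n+j)$, $0\le j\le k-1$) exactly as the two summands of $H_1$ were treated there. I expect no convergence issues at this stage, since the statement concerns finite partial sums $\sum_{n\le m}$; the passage to infinite series and the attendant estimates are handled subsequently.
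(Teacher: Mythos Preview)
Your re-indexing strategy---substitute $N=a_n$, recover the original index as $n=[N/a]+1$ via Lemma~\ref{thbrasslem4}, and use the indicator $h$ to extend the sum to all integers up to the cutoff---is correct and is precisely what the paper's one-line proof (``using \eqref{thbrass4}'') intends. The same works verbatim for the $B$-sum with $N=b_n$ and $n=[N/b]+1$.

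The genuine gap is your treatment of what you call the ``main obstacle.'' In Section~7 the irrationals $a,b>1$ are \emph{arbitrary}; the relation $1/a+1/b=1$ is not assumed here, and your proposed reconciliation---absorbing the discrepancy between $[n/a]$ and $[n/b]$ into a constant shift of the $j$-index---cannot work, because $[n/a]-[n/b]$ is not constant in $n$. The occurrence of $g([n/a]+j+1)$ in the second sum of \eqref{thbrass37a} is simply a typographical error for $g([n/b]+j+1)$: compare \eqref{thbrass37b}, where the lemma is quoted in the proof of Theorem~\ref{thbrassthm2} with $[n/b]$ restored. (Similarly the upper limits $x$ and $y$ on the two sums appear to be interchanged in \eqref{thbrass36} and \eqref{thbrass37a}; the natural range for the $h$-sum is $n\le y$ and for the $w$-sum is $n\le x$, consistent with the cutoffs in Definition~\ref{thbrassdef4} and with the way Theorem~\ref{thbrassthm4} is applied in \eqref{thbrass24a}--\eqref{thbrass24b}.) Once you recognize these as typos, there is nothing to reconcile: the proof is exactly the routine re-indexing you describe, with no appeal to complementarity needed or available.
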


\begin{thm}
\label{thbrassthm4}
Let $c>1$ be any irrational number and set $d= \{c \}^{-1}$ and
$q=c-\{c\}$. Let  $c_n=[ c n ]$, $C= (  c_n)_{n=1}^\infty$, $d=  [ dn ]$ and $D=(d_n)_{n=1}^\infty$.
Also let $ c^{'}_n = [ c^{-1} n ]$ and for any integer $m$, let $x=x(m,c)=[ m
c]$.  Let $f(n)$ and $g(n)$ be given as in Definition \ref{thbrassdef3} in terms of $f_{\frac{1}{c}}(n)$, and $g_{\frac{1}{d}}(n)$. Then 
\begin{align*}
  \sum_{n=1}^x &\frac{kq f(n)+\sum_{j=1}^k f(n)g([ \frac {n}{c}]+j+1)}{n} = \\\nonumber 
  &k\left(1+ \log m +\log   c+ \gamma \right) +\sum_{j=1}^k\{\{c\}j\}- \\ \nonumber 
 & k\sum_{n=1}^\infty \frac{c\{ c^{-1} (n+1)\}+\{ \{c\}(c^{'}_{n+1}+k+1)\}}{n(n+1)}+\\\nonumber
&O\left(\frac{k}{m}\right)\text{.}\nonumber
\end{align*}
\end{thm}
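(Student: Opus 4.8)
The plan is to obtain Theorem~\ref{thbrassthm4} from Theorem~\ref{thbrassthm3} (equivalently, from Lemmas~\ref{thbrasslem10} and~\ref{thbrasslem11}) by linearity and summation over the shift index $j$. First I would split the left-hand side as
$$\sum_{n=1}^x \frac{kq f(n)+\sum_{j=1}^{k} f(n)g([ \frac{n}{c}]+j+1)}{n}=\sum_{j=1}^{k}\left(\sum_{n=1}^x \frac{q f(n)+ f(n)g([ \frac{n}{c}]+j+1)}{n}\right)\text{,}$$
noting that the factor $kq$ on the left is recovered exactly because there are $k$ values of $j$ (the $j$-range being the one from Lemma~\ref{thbrasslem14a}). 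Each inner sum on the right is then a single instance of Theorem~\ref{thbrassthm3} with its parameter replaced by the current $j$ (alternatively: apply Lemma~\ref{thbrasslem10} to the $qf(n)/n$ piece, after multiplying by $kq$, and Lemma~\ref{thbrasslem11} to each $f(n)g([n/c]+j+1)/n$ piece).

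Next I would add the $k$ resulting expressions and collapse the main terms. The constant $1$ sums to $k$; the term $\log m+\log c+\gamma$ sums to $k(\log m+\log c+\gamma)$; in the tail series, the $\{c^{-1}(n+1)\}$-contribution sums to $kc\,\{c^{-1}(n+1)\}$ because $q+d^{-1}=c$; and the boundary terms $\{\{c\}(j+1)\}$ collect, after reindexing the endpoints, to a sum of the shape $\sum_{j=1}^{k}\{\{c\}j\}$. Interchanging the finite sum over $j$ with the sum over $n$ in the tail series — legitimate since each fractional part in the numerator is $\le 1$ and $\sum_n 1/(n(n+1))$ converges absolutely — produces the infinite series in the statement. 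If one prefers to work through Lemmas~\ref{thbrasslem10}--\ref{thbrasslem11} directly rather than through Theorem~\ref{thbrassthm3}, this is also where the algebraic identities $q+\{c\}=c$ and $(cd)^{-1}=\{c\}/c$, forced by $d=\{c\}^{-1}$, are used to merge the two main terms and the two copies of the $\{c^{-1}(n+1)\}$-tail.

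Finally I would collect the error terms: Theorem~\ref{thbrassthm3} (resp.\ Lemma~\ref{thbrasslem11}) contributes one $O(1/m)$ for each of the $k$ values of $j$, and together with the $O(kq/m)$ from the $q$-piece these sum to $O(k/m)$, which is exactly the form asserted.

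The step I expect to be the main obstacle is precisely the control of this error: the argument needs the implied constant in the $O(1/m)$ of Theorem~\ref{thbrassthm3} (hence of Lemma~\ref{thbrasslem11}) to be \emph{uniform} in the shift parameter, not merely valid for each fixed value of it; otherwise summing $k$ error terms need not give $O(k/m)$. Tracing the proof of Lemma~\ref{thbrasslem11}, the error arises from $F_3(x)/x$, from completing the tail of the integral over $F_2$, and from $\log x=\log(cm)+O(1/m)$, and in each case the bound depends only on $c$ and on $x=[cm]$ and not on the shift; so the required uniformity does hold, but this is the one estimate I would write out carefully rather than quote as a black box. A minor secondary point is the justification of the interchange of the finite and infinite summations mentioned above, together with the bookkeeping in reindexing the boundary sum $\sum_{j}\{\{c\}(j+1)\}$.
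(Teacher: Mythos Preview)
Your proposal is correct and follows essentially the same route as the paper: split the left-hand side as $\sum_{j=1}^{k}\sum_{n=1}^{x}\frac{qf(n)+f(n)g([n/c]+j+1)}{n}$, apply Theorem~\ref{thbrassthm3} to each inner sum, and then sum over $j$. Your discussion of the uniformity in $j$ of the $O(1/m)$ error is in fact more careful than the paper, which simply invokes Theorem~\ref{thbrassthm3} and sums without comment.
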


\begin{proof}
 Note that
\small{\begin{align}
 \sum_{n=1}^x \frac{kq f(n)+\sum_{j=1}^k f(n)g([ \frac {n}{c}]+j+1)}{n} = &\sum_{n=1}^x \frac{\sum_{j=1}^k \left( q f(n)+f(n)g([ \frac {n}{c}]+j+1)\right)}{n}\\\nonumber
  =&\sum_{j=1}^k \sum_{n=1}^x \frac{ q f(n)+f(n)g([ \frac {n}{c}]+j+1)}{n}\text{.}\nonumber
\end{align}}
Now apply Theorem \ref{thbrassthm3} to the inner sum to get
\begin{align}
\sum_{j=1}^k &\sum_{n=1}^x \frac{q f(n)+f(n)g([ \frac {n}{c}]+j+1)}{n} = \\\nonumber
&\sum_{j=1}^{k} \left(O\left(\frac{1}{m}\right)+ 1+ \log m +\log   c+ \gamma +\left\lbrace\{c\}(k+1)\right\rbrace -\right.\\\nonumber
&\left. \sum_{n=1}^{\infty} \frac{c\{ c^{-1} (n+1)\}+\{ \{c\}(c^{'}_{n+1}+k+1)\}}{n(n+1)}\right)\text{,}\nonumber
\end{align}
and then sum from $j=1$ to $k$ to complete the proof.
\end{proof}

We can now give the proof of Theorem \ref{thbrassthm2}.

\section{Proof of Theorem \ref{thbrassthm2}}
From Lemma \ref{thbrasslem14a} we know that
\begin{align}
 \label{thbrass37b}
\sum_{n\leq m}\left(\frac{a_{n+k}}{a_n}-\frac{b_{n+k}}{b_n}\right)   &=  \sum_{n \leq x} \frac{qkh(n)+\sum_{j=0}^{k-1}h(n)f([ \frac{n}{a} ]
+j+1)}{n}\\\nonumber & - \sum_{n \leq y}\frac{rkw(n)+\sum_{j=0}^{k-1}w(n)g([ \frac{n}{b} ]+j+1)}{n}\text{.}
\end{align}
\noindent By Theorem \ref{thbrassthm4}  we see that
\begin{align}
  \label{thbrass24a}
  \sum_{n=1}^x \frac{kq h(n)+\sum_{j=1}^k h(n)f([ \frac {n}{a}]+j+1)}{n} &= k\left(1+ \log m +\log   a+ \gamma \right) \\ \nonumber 
 & +\sum_{j=1}^k\{\{a\}j\} -\sum_{n=1}^\infty \frac{ak\{ a^{-1} (n+1)\}}{n(n+1)}\\\nonumber
 &- \sum_{n=1}^\infty \frac{\sum_{j=1}^k\{ \{a\}(a^{'}_{n+1}+j)\}}{n(n+1)}\\\nonumber
 & +O\left(\frac{k}{m}\right)\text{.}
\end{align} 
\noindent Also 
\begin{align}
  \label{thbrass24b}
  \sum_{n=1}^x \frac{kq w(n)+\sum_{j=1}^k w(n)g([ \frac {n}{b}]+j+1)}{n} &= k\left(1+ \log m +\log   b+ \gamma \right)\\ \nonumber 
 &+ \sum_{j=1}^k\{\{b\}j\} -\sum_{n=1}^\infty \frac{kb\{ b^{-1} (n+1)\}}{n(n+1)}\\\nonumber
&  -\sum_{n=1}^\infty \frac{\sum_{j=1}^k\{ \{b\}(b^{'}_{n+1}+j)\}}{n(n+1)}\\\nonumber
 &+O\left(\frac{k}{m}\right)\text{.}
\end{align}
We subtract \eqref{thbrass24b} from \eqref{thbrass24a} to obtain
 \begin{align*}
\sum_{n=1}^\infty \left( \frac{a_{n+k}}{a_n}-\frac{b_{n+k}}{b_n} \right)&=
 k\left( \log a -\log b\right) + \sum_{j=1}^{k}\left(\left\lbrace j\{ a \}\right\rbrace - \left\lbrace j\{ b \}\right\rbrace\right)-\\\nonumber                  
& \sum_{n=1}^{\infty}\left( \frac{ak\{ a^{-1} (n+1) \}-bk\{ b^{-1} (n+1) \}}{n(n+1)}\right)-\\\nonumber
&\sum_{n=1}^{\infty}\left( \frac{\sum_{j=1}^k\{ \{ a\} (a^{'}_{n+1}+j) \}-\sum_{j=1}^k\{ \{ b\} (b^{'}_{n+1}+j) \}}{n(n+1)}\right)\text{.}                  
\end{align*}
This completes the proof.

\begin{remark}
 In the last steps of this proof we see the $\log m$ cancellation mentioned in the discussion in the intruductory section
\end{remark}

\section{Definition of Sturmian Sequences from Beatty Ratios and Final Remarks \label{thbrasssec6}}
The identity of Theorem \ref{thbrassthm2}

\begin{align*}
\sum_{n=1}^\infty \left( \frac{a_{n+k}}{a_n}-\frac{b_{n+k}}{b_n} \right)&=
 k\left( \log a -\log b\right) + \sum_{j=1}^k\left(\left\lbrace j\{ a \}\right\rbrace - \left\lbrace j\{ b \}\right\rbrace\right)-\\\nonumber                  
& \sum_{n=1}^{\infty}\left( \frac{ak\{ a^{-1} (n+1) \}-bk\{ b^{-1} (n+1) \}}{n(n+1)}\right)-\\\nonumber
&\sum_{n=1}^{\infty}\left( \frac{\sum_{j=1}^k\{ \{ a\} (a^{'}_{n+1}+j) \}-\sum_{j=1}^k\{ \{ b\} (b^{'}_{n+1}+j) \}}{n(n+1)}\right)                  
\end{align*}
can be written as 
$$\sum_{n=1}^\infty \left(\frac{a_{n+k}}{a_n}-\frac{b_{n+k}}{b_n}\right)+ P(a,b) = k(\log a- \log b)\text{,} $$

\noindent where the constant $P(a,b)$ representes the obvious missing expressions. This identity brings to mind the theorem of 
Frullani \cite{fr1} which says that, under certain conditions on $f$, 

$$ \int_0^\infty \frac{f(ax) -f(bx)}{x}dx=A(\log a - \log b)\text{.}$$

We conclude this paper by providing a new way to define Sturmian sequences in terms of differences of quotients of the form 
$\displaystyle \frac{a_{n+1}}{a_{n}}- \frac{b_{n+1}}{b_{n}}$. Sturmian sequences can also be defined in terms of differences of 
reciprocals of these quotients and other forms of difference quotients. This is in the spirit of
\cite{as1}, \cite{kimberling} and \cite{lothaire} in which the authors present several
 equivalent ways in which Beatty and Sturmian sequences can be generated.

\begin{thm}

 Let  $a$ and $b$ be irrational numbers such that $1<a<b$, and $\theta= \{ a \}=\{b\}$. Define 

\begin{equation*}
  h(n)= \frac{a_{n+1}}{a_{n}}- \frac{b_{n+1}}{b_{n}}\text{,}
\end{equation*}
let
\begin{equation*}
  f(n)=\left\{\begin{array}{ll}
		1\text{,} & h(n)>0\text{,}\\
		0\text{,} & \text{otherwise,}
	\end{array}\right.
\end{equation*}

\noindent and set $\textbf{f}:=f(1)f(2)f(3)...f(n)...$. Also, let $g:=g_{\theta}$ be the characteristic (Sturmian) sequence with 
slope $\theta$ defined by equation \eqref{thbrass3}, and set $\textbf{g}:=g(1)g(2)g(3)...g(n)...$. Then

  \begin{center} \textbf{f}=\textbf{g}\text{.} \end{center}

\end{thm}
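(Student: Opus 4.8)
The plan is to put $h(n)$ into a closed form from which its sign is transparent. Write $a=q+\theta$ and $b=r+\theta$, where $q=[a]$ and $r=[b]$ are positive integers; since $a<b$ while $\{a\}=\{b\}$ we must have $q<r$, so $r-q\ge 1$, and $\theta=\{a\}=\{b\}$ is irrational with $0<\theta<1$. Recall that the Sturmian sequence $g_\theta$ of slope $\theta$ has $n$-th letter $f_\theta(n):=[\theta(n+1)]-[\theta n]$ in the notation of Definition \ref{thbrassdef1} (equivalently $g_{1/\theta}'(n)$ by \eqref{thbrass4}). The first step is to apply Lemmas \ref{thbrasslem12} and \ref{thbrasslem13}: because $\{a\}=\{b\}=\theta$, both lemmas produce the \emph{same} increment function, namely $a_{n+1}=a_n+q+f_\theta(n)$ and $b_{n+1}=b_n+r+f_\theta(n)$. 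Consequently
\[
h(n)=\frac{a_n+q+f_\theta(n)}{a_n}-\frac{b_n+r+f_\theta(n)}{b_n}=\frac{q+f_\theta(n)}{a_n}-\frac{r+f_\theta(n)}{b_n}.
\]

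Next I would clear denominators. Since $a_nb_n\ge 1>0$, the sign of $h(n)$ is the sign of $N(n):=b_n\bigl(q+f_\theta(n)\bigr)-a_n\bigl(r+f_\theta(n)\bigr)=(b_nq-a_nr)+f_\theta(n)(b_n-a_n)$. Two bookkeeping identities evaluate this. Because $\{an\}=\{(q+\theta)n\}=\{\theta n\}$ and likewise $\{bn\}=\{\theta n\}$, we may write $a_n=(q+\theta)n-\{\theta n\}$ and $b_n=(r+\theta)n-\{\theta n\}$; hence $b_n-a_n=(b-a)n=(r-q)n$, and a one-line expansion gives $b_nq-a_nr=(r-q)\bigl(\{\theta n\}-\theta n\bigr)=-(r-q)[\theta n]$. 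Substituting, $N(n)=(r-q)\bigl(f_\theta(n)\,n-[\theta n]\bigr)$, and since $r-q>0$ this yields
\[
h(n)>0\ \Longleftrightarrow\ f_\theta(n)\,n>[\theta n].
\]

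It then remains to check the two possibilities $f_\theta(n)\in\{0,1\}$. If $f_\theta(n)=0$, then $f_\theta(n)\,n=0\le[\theta n]$ because $\theta n>0$, so $h(n)\le 0$ and $f(n)=0=g_\theta(n)$. If $f_\theta(n)=1$, then $f_\theta(n)\,n=n$ while $[\theta n]\le\theta n<n$ since $0<\theta<1$ and $n\ge 1$; thus $h(n)>0$ and $f(n)=1=g_\theta(n)$. Either way $f(n)=g_\theta(n)$ for every $n\ge 1$, i.e. $\mathbf{f}=\mathbf{g}$. There is no real obstacle here: the conceptual point is simply that $\{a\}=\{b\}$ forces the two Beatty increments to share the single characteristic function $f_\theta$, so its contributions to the two numerators cancel in exactly the right way; the only things to watch are the fractional-part bookkeeping (using $\{an\}=\{bn\}=\{\theta n\}$ throughout) and noticing that the case $f_\theta(n)=0$ gives only the non-strict inequality $h(n)\le 0$, which is all that is needed since $f$ was defined to be $0$ whenever $h(n)\le 0$.
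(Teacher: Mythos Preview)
Your proof is correct and follows essentially the same route as the paper: both arguments use Lemmas \ref{thbrasslem12}--\ref{thbrasslem13} to write $a_{n+1}-a_n=q+f_\theta(n)$ and $b_{n+1}-b_n=r+f_\theta(n)$, clear denominators, and factor the numerator as $(r-q)\bigl(nf_\theta(n)-[\theta n]\bigr)$ to read off the sign. The only difference is cosmetic---the paper writes $a_n=qn+\theta_n$ with $\theta_n:=[\theta n]$ while you parametrize via $\{\theta n\}$---and you are a touch more explicit than the paper in handling the borderline case $f_\theta(n)=0$, $[\theta n]=0$ where $h(n)=0$.
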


\begin{remark}
 By \eqref{thbrass4} we see that $h(n)>0$ if and only if $n$ is in the Beatty sequence $B$ with slope $1/\theta$. Thus, given 
any Beatty sequence, we can define an infinite word \textbf{g} with a $1$ or a $0$ in each position, and
 such that \textbf{g} contains a $1$ in the $n^{\text{nth}}$ position if and only if $n\in B$.
\end{remark}

\begin{proof}
 Suppose $b=r+ \theta$ and $a=q+\theta$.  Write 
$b_n=[ (r+\theta)n]=rn+[ \theta n ]=: rn+\theta_n$, and similarly write $a_n=qn+\theta_n$.
One can use \eqref{thbrass32} and \eqref{thbrass32a} to write $h(n)$ as
\begin{align}
 \label{thbrass40a}
h(n)&=\frac{a_{n+1}}{a_n}-\frac{b_{n+1}}{b_n}=\frac{a_{n}+q+g(n)}{a_n}-\frac{b_{n}+r+g(n)}{b_n}\\\nonumber
    &=\frac{\theta_n(q-r)+ng(n)(r-q)}{a_nb_n}=\frac{(ng(n)-\theta_n)(r-q)}{a_nb_n}\text{.}
\end{align}

\noindent Since  $\theta_n<n$ (because $\theta<1$) and $r>q$, it follows that $g(n)=1$ if and only if $h(n)>0$. Thus, the theorem holds.
\end{proof}

\section{Acknowledgements}
The author thanks Prof. Kenneth B. Stolarsky for a number of helpful discussions.

\bibliographystyle{apa}

\end{document}